\newcommand{\la}{\langle}
\newcommand{\ra}{\rangle}
\newtheorem{theorem}{Theorem}
\newtheorem{corollary}[theorem]{Corollary}
\newtheorem{definition}[theorem]{Definition}
\newtheorem{lemma}[theorem]{Lemma}
\newtheorem{proposition}[theorem]{Proposition}
\newtheorem{remark}[theorem]{Remark}
\newtheorem*{Theorem2}{Theorem 2}
\newcommand{\Cl}{\mathrm{Cl}}
\newcommand{\zz}{\mathbb{Z}[\frac{1}{2}]}
\begin{document}
\author{Gili Golan Polak}\thanks{The research was supported by ISF grant 2322/19.}

\title{Thompson's group $F$ is almost $\frac{3}{2}$-generated}

\begin{abstract}
	Recall that a group $G$ is said to be $\frac{3}{2}$-generated if every non-trivial element of $G$ belongs to a generating pair of $G$. Thompson's group $V$ was proved to be $\frac{3}{2}$-generated by Donoven and Harper in 2019. It was the first example of an infinite finitely presented non-cyclic $\frac{3}{2}$-generated group. 
Recently, Bleak, Harper and Skipper proved that Thompson's group $T$ is also $\frac{3}{2}$-generated. 
In this paper, we prove that Thompson's group $F$ is ``almost'' $\frac{3}{2}$-generated in the sense that every element of $F$ whose image in the abelianization forms part of a generating pair of $\mathbb{Z}^2$ is part of a generating pair of $F$. We also prove that for every  non-trivial element $f\in F$ there is an element $g\in F$ such that the subgroup $\la f,g\ra$ contains the derived subgroup of $F$. Moreover, if 
$f$ does not belong to the derived subgroup of $F$, then there is an element $g\in F$ such that $\la f,g\ra$ has finite index in $F$. 
\end{abstract}

\maketitle

\section{Introduction}

A group $G$ is said to be $\frac{3}{2}$-generated if every non-trivial element of $G$ is part of a generating pair of $G$. In 2000, settling a problem of Steinberg from 1962 \cite{S}, Guralnick and Kantor proved that all finite simple groups are $\frac{3}{2}$-generated \cite{GK}. 
In 2008, Breuer, Guralnick and Kantor \cite{BGK} observed that if a group $G$ is $\frac{3}{2}$-generated then every proper quotient of it must be cyclic. They conjectured that for finite groups this is also a sufficient condition.
The conjecture was proved in 2021 by Burness, Guralnick and Harper \cite{BGH}.

Note that for an infinite group $G$ every proper quotient being cyclic is not a sufficient condition for the group  being $\frac{3}{2}$-generated. Indeed, the infinite alternating group $A_{\infty}$ is simple but not finitely generated. Moreover, there are finitely generated infinite simple groups which are not $2$-generated (see \cite{Guba})
and in particular, are not $\frac{3}{2}$-generated. Recently, Cox \cite{Cox} constructed an example of an infinite $2$-generated group $G$ such that every proper quotient of $G$ is cyclic and yet $G$ is not $\frac{3}{2}$-generated. 

Obvious examples of infinite $\frac{3}{2}$-generated groups are the Tarski monsters constructed by Olshanskii \cite{O}. Recall that Tarski monsters are infinite finitely generated non-cyclic groups where every proper subgroup is cyclic\footnote{There are two types of Tarski monsters. One where every proper subgroup is infinite cyclic and one where every proper subgroup is cyclic of order $p$ for some fixed prime $p$.}. In particular, if $T$ is a Tarski monster, then $T$ is generated by any pair of non-commuting elements of $T$. Since the center of $T$ is trivial, every non-trivial element of $T$ is part of a generating pair of $T$. 

In 2019, Donoven and Harper gave the first examples of infinite non-cyclic $\frac{3}{2}$-generated groups, other than Tarski monsters. Indeed, they proved that Thompson's group $V$ is $\frac{3}{2}$-generated. 
More generally, they proved that all  Higman–Thompson groups $V_n$ (see \cite{Higman}) and all Brin–Thompson groups $nV$ (see \cite{Brin}) are $\frac{3}{2}$-generated.
In 2020, Cox constructed two more examples of infinite $\frac{3}{2}$-generated groups with some special properties (see \cite{Cox}). Quite recently (in 2022), Bleak, Harper and Skipper proved that Thompson's group $T$ is also $\frac{3}{2}$-generated \cite{BHS}. 

In this paper, we study Thompson's group $F$. Recall that Thompson's group $F$ is the group of all piecewise-linear homeomorphisms  of the interval $[0,1]$ with finitely many breakpoints where all breakpoints are dyadic fractions (i.e., numbers from $\mathbb{Z}[\frac{1}{2}]\cap (0,1)$) and all slopes are integer powers of $2$. Thompson's group $F$ is $2$-generated. The derived subgroup of $F$ is infinite and simple and can be characterized as the subgroup of $F$ of all functions $f$ with slope $1$ both at $0^+$ and at $1^-$ (see \cite{CFP}). 
The abelianization $F/[F,F]$ is isomorphic to $\mathbb{Z}^2$. The standard abelianization map $\pi_{ab}\colon F\to \mathbb{Z}^2$ maps every function $f\in F$ to $(\log_2(f'(0^+)),\log_2(f'(1^-)))$ (see \cite{CFP}). Since the abelianization of $F$ is $\mathbb{Z}^2$, Thompson's group $F$ cannot be $\frac{3}{2}$-generated. However, we prove that it is ``almost'' $\frac{3}{2}$-generated in the sense that the following theorem holds. 

\begin{theorem}\label{thm:almost3/2}
	Every element of $F$ whose image in the abelianization $\mathbb{Z}^2$ is part of a generating pair of $\mathbb{Z}^2$ is part of a generating pair of $F$. 
\end{theorem}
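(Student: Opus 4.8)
The plan is to reduce the statement to two independent-looking requirements on the partner $g$, and then to meet them simultaneously by an explicit construction. Write $\pi_{ab}(f)=(a,b)$; the hypothesis says exactly that $(a,b)$ is a primitive vector, i.e. $\gcd(a,b)=1$, so there exists $(c,d)$ with $ad-bc=\pm 1$. I claim it suffices to produce $g\in F$ with $\pi_{ab}(g)=(c,d)$ such that $\la f,g\ra\supseteq [F,F]$. Indeed, the subgroup $\la f,g\ra$ then surjects onto the abelianization $\BZ^2$ (since $(a,b),(c,d)$ form a basis) and contains the kernel $[F,F]$ of $\pi_{ab}$; any subgroup of $F$ containing $[F,F]$ is the full preimage $\pi_{ab}^{-1}\big(\pi_{ab}(\la f,g\ra)\big)$, and here that preimage is $\pi_{ab}^{-1}(\BZ^2)=F$. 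So the whole theorem rests on the single containment $[F,F]\subseteq\la f,g\ra$ for a $g$ with prescribed abelianization image.

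The two requirements cannot be decoupled, and understanding why pinpoints the difficulty. Nielsen moves on the pair $(f,g)$ (namely $g\mapsto gf^{\pm1}$, $g\mapsto g^{-1}$, and the symmetric moves in $f$) leave $\la f,g\ra$ unchanged while acting on the pair $(\pi_{ab}(f),\pi_{ab}(g))$ by $\GL_2(\BZ)$; in particular $g\mapsto gf^{k}$ sends $(c,d)$ to $(c+ka,d+kb)$ but leaves $ad-bc$ fixed. Thus one cannot first arrange $[F,F]\subseteq\la f,g\ra$ and afterwards repair the determinant. Moreover, surjecting onto $\BZ^2$ is genuinely weaker than containing $[F,F]$: a point stabilizer such as $\mathrm{Stab}_F(1/2)$ already surjects onto $\BZ^2$ (its elements realize arbitrary germs at $0^+$ and $1^-$), yet $\mathrm{Stab}_F(1/2)\cap[F,F]$ is a proper subgroup of the simple group $[F,F]$, since $[F,F]$ acts transitively on $(0,1)$. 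Hence the construction must pin $\pi_{ab}(g)=(c,d)$ exactly while steering $\la f,g\ra$ out of every such proper subgroup.

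For the construction I would fix a representative $g_0$ with $\pi_{ab}(g_0)=(c,d)$ and then search for the partner within its coset $g=g_0h$, $h\in[F,F]$, so that the abelianization image is automatically correct. Setting $N=\la f,g\ra\cap[F,F]$, which is normal in $\la f,g\ra$ and contains every commutator such as $[f,g]$, the goal becomes $N=[F,F]$. The mechanism is to exhibit a nontrivial element $w\in N$ supported on a small dyadic interval $I\subsetneq(0,1)$ — a ``bump'' — and then to use elements of $\la f,g\ra$ to conjugate $w$ so that the supports of the resulting conjugates $u\,w\,u^{-1}$ sweep across all of $(0,1)$. Since $[F,F]$ is simple, the $F$-normal closure of any nontrivial element is all of $[F,F]$, and $[F,F]$ is generated by such bumps; so once $\la f,g\ra$ supplies enough conjugating elements to mimic the needed $F$-conjugations on a covering family of dyadic intervals, one gets $N=[F,F]$ by a chain argument on overlapping supports. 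The element $h\in[F,F]$ is precisely the tuning parameter used to make $w$ have a controlled small support and to guarantee that $\la f,g\ra$ moves that support around transitively enough.

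The hard part will be the support bookkeeping in this last step: producing a bump $w\in N$ whose support is both nonempty and contained in a prescribed small interval, and verifying that conjugation by available words in $f$ and $g$ genuinely covers $(0,1)$, all while the abelianization of $g$ stays locked at $(c,d)$. Equivalently, the crux is to certify that the chosen $g$ escapes every infinite-index maximal subgroup of $F$ — the point stabilizers and their relatives — and it is here that the transitivity properties of the $F$-action on dyadic rationals, the simplicity of $[F,F]$, and the flexibility afforded by the coset parameter $h$ must be combined. I expect this transitivity-and-support analysis, rather than the abelianization arithmetic, to carry essentially all the weight of the proof.
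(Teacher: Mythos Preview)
Your reduction in the first paragraph is correct and matches the paper exactly: once $(a,b)$ is primitive, choose $(c,d)$ with $ad-bc=\pm1$, and it suffices to produce $g$ with $\pi_{ab}(g)=(c,d)$ and $[F,F]\subseteq\la f,g\ra$. From that point on, however, your proposal is a statement of intentions rather than a proof. You never construct $g$; you only say you would ``search'' in the coset $g_0[F,F]$ and ``tune'' $h$ so that some commutator becomes a small-support bump and so that $\la f,g\ra$ moves that support around transitively. Neither claim is substantiated, and they are exactly where all the content lies. The appeal to simplicity of $[F,F]$ gives only that the \emph{$F$-normal} closure of a nontrivial element is $[F,F]$; what you need is the closure under conjugation by $\la f,g\ra$, and for that you must actually exhibit the transitivity you invoke. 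The reformulation as ``escape every infinite-index maximal subgroup of $F$'' is correct in this setting but not actionable, since those maximal subgroups are not classified.

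The paper fills this gap by an explicit construction, via a different mechanism. It invokes a prior criterion (Theorem~\ref{gen}): a subgroup $H\le F$ contains $[F,F]$ iff (i) $\Cl(H)\supseteq[F,F]$ and (ii) some $h\in H$ fixes a dyadic $\alpha$ with $h'(\alpha^-)=1$, $h'(\alpha^+)=2$. Given the nontrivial $f$, Lemma~\ref{lem:obvious} extracts branches $u,v,w$ with $[u]<[v]<[w]$ and $f^{\pm1}$ carrying $u\to v\to w$. Proposition~\ref{prop:main} then builds $g$ as an explicit tree-diagram: the first and last branches are chosen so that $\pi_{ab}(g)=(c,d)$ on the nose; a copy of the $x_1$ tree-pair is planted at $w10$, forcing condition~(ii); and the remaining branches are arranged so that the equivalence relation $\sim_H$ on finite binary words satisfies the hypotheses of Lemma~\ref{lem:tree}, which yields condition~(i). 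That branch-by-branch bookkeeping is precisely the ``support bookkeeping'' you defer to the hard part; without it, or a genuine substitute carrying the same weight, the argument is not yet a proof.
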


In fact, we have the following more general theorem. 

\begin{theorem}\label{thm:main intro}
	Let $(a,b),(c,d)\in\mathbb{Z}^2$ be such that $\{a,c\}\neq\{0\}$ and $\{b,d\}\neq\{0\}$. Let $f\in F$ be a non-trivial element such that $\pi_{ab}(f)=(a,b)$, then there is an element $g\in F$ such that $\pi_{ab}(g)=(c,d)$ and such that $$\la f,g\ra =\pi_{ab}^{-1}(\la (a,b),(c,d)\ra).$$
\end{theorem}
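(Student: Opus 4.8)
The plan is to convert the statement into a problem about a single subgroup and then into a purely dynamical assertion. By the correspondence theorem applied to $[F,F]=\ker\pi_{ab}$, the subgroups of $F$ containing $[F,F]$ are in bijection with the subgroups of $F/[F,F]=\BZ^2$, via $H\mapsto\pi_{ab}(H)$; in particular, since $\BZ^2$ is abelian, all such subgroups are normal. Consequently $\pi_{ab}^{-1}(\la(a,b),(c,d)\ra)$ is the \emph{unique} subgroup of $F$ that both contains $[F,F]$ and has image $A:=\la(a,b),(c,d)\ra$ in $\BZ^2$. Now if $g$ is any element with $\pi_{ab}(g)=(c,d)$, then $\pi_{ab}(\la f,g\ra)=\la(a,b),(c,d)\ra=A$ automatically, because $(a,b)$ and $(c,d)$ generate $A$. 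Therefore the theorem reduces to the following core statement: for every nontrivial $f\in F$ with $\pi_{ab}(f)=(a,b)$ there is a $g\in F$ with $\pi_{ab}(g)=(c,d)$ and $[F,F]\le\la f,g\ra$; this is the refinement of the abstract's assertion that $\la f,g\ra$ can be made to contain the derived subgroup.

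Before constructing $g$ I would record why the hypotheses $\{a,c\}\neq\{0\}$ and $\{b,d\}\neq\{0\}$ are exactly the right ones. An element of $F$ whose $\pi_{ab}$-image has first coordinate $0$ has slope $1$ at $0^+$, and being piecewise linear through the origin it is therefore the identity on a neighbourhood of $0$. Hence if both $a$ and $c$ vanished, $f$ and $g$ would fix a common neighbourhood $[0,\varepsilon]$ pointwise, so $\la f,g\ra$ would fix $[0,\varepsilon]$; but $[F,F]$ contains elements with support meeting every neighbourhood of $0$, so $[F,F]\not\le\la f,g\ra$. The symmetric statement holds at $1^-$. Thus $\{a,c\}\neq\{0\}$ guarantees that one of $f,g$ moves points arbitrarily close to $0$, and $\{b,d\}\neq\{0\}$ does the same at $1$; these are precisely the "flows'' the construction will need at the two ends.

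For the construction I would prescribe only the germs of $g$ at the endpoints (this is all that $\pi_{ab}(g)=(c,d)$ constrains) and then design the interior of $g$ to interact with the given $f$. The idea is to combine two tools. First, because $\{a,c\}\neq\{0\}$ and $\{b,d\}\neq\{0\}$, one of the two generators has attracting/repelling behaviour at $0$ and one at $1$; conjugating by its powers drives the support of a chosen element of $\la f,g\ra$ toward the corresponding endpoint. Second, since $f$ is nontrivial it has an orbital, so a commutator such as $[f,g^{n}fg^{-n}]$ lies in $[F,F]=\ker\pi_{ab}$ and, for a suitable choice of the interior of $g$, is a nontrivial "bump'' supported on a proper dyadic subinterval. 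Applying the endpoint flows to such bumps produces finitely many nontrivial elements of $[F,F]\cap\la f,g\ra$ whose supports form an overlapping dyadic cover of $(0,1)$. I would then invoke a generation lemma for $F$ — that the subgroups supported on the members of an overlapping dyadic cover of $(0,1)$ together generate $[F,F]$, which is simple — to conclude $[F,F]\le\la f,g\ra$, and hence $\la f,g\ra=\pi_{ab}^{-1}(A)$ by the reduction above.

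The main obstacle is the uniformity of this over all $f$. The element $f$ is an arbitrary nontrivial element of $F$ whose fixed‑point set, slopes, and interior orbital structure are not under our control, yet the single $g$ — whose germs are moreover pinned to realise $(c,d)$ — must be engineered to mesh with every such $f$ and to yield a bump commutator that can genuinely be spread across $(0,1)$. I expect the argument to split into cases according to the behaviour of $f$ at the endpoints (governed by the signs of $a$ and $b$) and according to whether $f$ has interior fixed points, choosing the interior of $g$ differently in each case. The delicate point is to ensure that the conjugates of the bump generate \emph{all} of $[F,F]$ rather than some proper subgroup; this is where the finer structure theory of subgroups of $F$, in particular a usable sufficient criterion for a $2$-generated subgroup to contain $[F,F]$, carries the real weight of the proof.
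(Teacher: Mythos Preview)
Your reduction is exactly the paper's: once $\pi_{ab}(g)=(c,d)$, the equality $\la f,g\ra=\pi_{ab}^{-1}(A)$ is equivalent to $[F,F]\le\la f,g\ra$, and the paper states this reduction verbatim. Your necessity discussion for the hypotheses $\{a,c\}\neq\{0\}$, $\{b,d\}\neq\{0\}$ is also correct (and matches Remark~\ref{rem:not finitely generated}). The anticipated case split is real: the paper packages it as Proposition~\ref{prop:main}, four cases according to which of $c,d$ vanish, with the hypotheses on $a,b$ supplying the missing endpoint dynamics in each case.

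Where your plan diverges, and where there is a genuine gap, is in the mechanism for forcing $[F,F]\le\la f,g\ra$. Producing bump commutators and spreading them by endpoint conjugation yields, at best, one nontrivial element of $[F,F]$ per interval of your cover; a collection of single bumps supported on an overlapping cover of $(0,1)$ does \emph{not} generate $[F,F]$ in general (a bump generates a cyclic group, and even overlapping bumps can generate rather small subgroups of $F$). You seem aware of this, since you defer to an unnamed ``usable sufficient criterion'', but that criterion is precisely the content of the argument and is not supplied by your sketch.

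The paper's route is quite different from the dynamical picture you outline. The criterion it uses is Theorem~\ref{gen}: $H\supseteq[F,F]$ iff (i) the \emph{closure} $\Cl(H)$ (piecewise-$H$ functions) contains $[F,F]$, and (ii) some $h\in H$ fixes a dyadic $\alpha$ with $h'(\alpha^-)=1$, $h'(\alpha^+)=2$. Condition (ii) is a single local requirement that the paper builds directly into $g$ by inserting a copy of $x_1$'s tree-pair on a subbranch. Condition (i) is checked via an equivalence relation $\sim_H$ on finite binary words (``$u\sim_H v$'' means some element of $H$ has the branch pair $u\to v$), together with Lemma~\ref{lem:tree}, which says that if $\sim_H$ collapses enough of the branch structure of a fixed tree $T$ then $\Cl(H)\supseteq[F,F]$. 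The only feature of $f$ used is Lemma~\ref{lem:obvious}: a nontrivial $f$ (or $f^{-1}$) has branch pairs $u\to v$ and $v\to w$ with $[u]<[v]<[w]$. The element $g$ is then given by an explicit tree-diagram built on a tree $T$ containing $u,v0,v1,w0,w10,w11$ as branches, with small trees grafted at the extreme leaves to realise the prescribed slopes $(c,d)$ at $0^+$ and $1^-$; the branch pairs of this $g$, combined with $u\sim_H v\sim_H w$ coming from $f$, are arranged so that the hypotheses of Lemma~\ref{lem:tree} are satisfied. There is no commutator or overlapping-support argument; the work is combinatorial on branches rather than dynamical on orbitals.
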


Recall that a subgroup $H$ of $F$ is normal if and only if it contains the derived subgroup of $F$ \cite{CFP} (in particular, all finite index subgroups of $F$ are normal subgroups of $F$).
It follows that the normal subgroups of $F$ are exactly the subgroups $\pi_{ab}^{-1}(\la(a,b),(c,d)\ra)$ for $(a,b),(c,d)\in\mathbb{Z}^2$. Note that if $a=c=0$ or $b=d=0$, then $\pi_{ab}^{-1}(\la(a,b),(c,d)\ra)$ is not finitely generated (see Remark \ref{rem:not finitely generated} below). Theorem \ref{thm:main intro} implies that all other normal subgroups of $F$ are $2$-generated. Hence, we have the following.

\begin{corollary}
	Every finitely generated normal subgroup of $F$ is $2$-generated. %
\end{corollary}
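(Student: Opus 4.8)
The plan is to deduce the corollary directly from Theorem \ref{thm:main intro} together with the classification of normal subgroups recorded above. Let $N$ be a finitely generated normal subgroup of $F$. Since every normal subgroup of $F$ contains $[F,F]$, we have $N=\pi_{ab}^{-1}(A)$ for a unique subgroup $A\le\BZ^2$, and since every subgroup of $\BZ^2$ is free abelian of rank at most $2$, such an $A$ is generated by at most two elements.

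First I would rule out the degenerate cases. Remark \ref{rem:not finitely generated} shows that $\pi_{ab}^{-1}(A)$ fails to be finitely generated whenever $A$ is contained in one of the coordinate axes $\{0\}\times\BZ$ or $\BZ\times\{0\}$ (these are the cases $a=c=0$ and $b=d=0$ in its statement). Since $N=\pi_{ab}^{-1}(A)$ is finitely generated, $A$ lies in neither axis; in particular $A\neq\{0\}$.

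Next I would choose a generating pair $(a,b),(c,d)$ of $A$ meeting the hypotheses of Theorem \ref{thm:main intro}, that is, with $\{a,c\}\neq\{0\}$ and $\{b,d\}\neq\{0\}$. If $\rank A=2$, any $\BZ$-basis $(a,b),(c,d)$ works, since $a=c=0$ or $b=d=0$ would place both basis vectors in a single axis and force $\rank A\le 1$. If $\rank A=1$, write $A=\la(a,b)\ra$; because $A$ avoids both axes its generator satisfies $a\neq 0$ and $b\neq 0$, so the pair $(a,b),(0,0)$ generates $A$ and meets both conditions. In either case $(a,b)\neq(0,0)$.

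Finally, I would choose any non-trivial $f\in N$ with $\pi_{ab}(f)=(a,b)$; such an $f$ exists because the fibre over $(a,b)$ is a non-empty coset and $(a,b)\neq(0,0)$ forces $f\neq 1$. Theorem \ref{thm:main intro}, applied to this $f$ and to $(c,d)$, then yields $g\in F$ with $\la f,g\ra=\pi_{ab}^{-1}(\la(a,b),(c,d)\ra)=\pi_{ab}^{-1}(A)=N$, so $N$ is $2$-generated. There is no genuine obstacle here beyond the bookkeeping of the rank cases: all the substance lives in Theorem \ref{thm:main intro}, and the only point that needs care is matching the axis-containment condition of Remark \ref{rem:not finitely generated} with the hypotheses $\{a,c\}\neq\{0\}$ and $\{b,d\}\neq\{0\}$ of the theorem.
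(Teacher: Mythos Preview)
Your proof is correct and follows the same approach as the paper: the paper simply observes (in the paragraph preceding the corollary) that every normal subgroup has the form $\pi_{ab}^{-1}(\la(a,b),(c,d)\ra)$, that Remark \ref{rem:not finitely generated} rules out the axis-contained cases, and that Theorem \ref{thm:main intro} handles the rest. Your write-up is a careful expansion of this, with the additional bookkeeping of the rank-$1$ case (taking $(c,d)=(0,0)$) made explicit.
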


In particular, every finite index subgroup of $F$ is $2$-generated. 
Recall that in \cite{BW} (see also \cite{BCR}), the finite index subgroups of $F$ that are isomorphic to $F$ were characterized. Let $p,q\in\mathbb{N}$. We denote by $F_{p,q}$ the subgroup $\pi_{ab}^{-1}(p\mathbb{Z}\times q\mathbb{Z})$ of $F$.
Then for every $p,q\in\mathbb{N}$ the subgroup $F_{p,q}$ is isomorphic to $F$ and these are the only finite index subgroups of $F$ that are isomorphic to $F$ \cite{BW}. In particular, the finite index  subgroups $F_{p,q}$ of $F$ are known to be $2$-generated. 
   It was conjectured in \cite[Conjecture 12.6]{G16} that all finite index subgroups of $F$ are $2$-generated.

Note  that for every non-trivial $(a,b)\in\mathbb{Z}^2$, there exists $(c,d)\in\mathbb{Z}^2$ such that $\la(a,b),(c,d)\ra$ is a finite index subgroup of $\mathbb{Z}^2$ of the form $p\mathbb{Z}\times q\mathbb{Z}$ for some $p,q\in\mathbb{N}$ (see Lemma \ref{lem:linear algebra} below). Hence, Theorem \ref{thm:main intro} implies the following.

\begin{corollary}
	Let $f\in F$ be an element whose image in the abelianization of $F$ is non-trivial. Then there is an element $g\in F$ such that the subgroup $\la f,g\ra$ is isomorphic to $F$ and has finite index in $F$. 
\end{corollary}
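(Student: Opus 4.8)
The plan is to chain together three ingredients already available: the linear-algebra statement (Lemma~\ref{lem:linear algebra}), the main theorem (Theorem~\ref{thm:main intro}), and the identification of the subgroups $F_{p,q}$ with $F$ from \cite{BW}. First I would set $(a,b)=\pi_{ab}(f)$, which is a non-trivial element of $\mathbb{Z}^2$ by hypothesis. By Lemma~\ref{lem:linear algebra} there is a vector $(c,d)\in\mathbb{Z}^2$ and positive integers $p,q$ with $\la(a,b),(c,d)\ra=p\mathbb{Z}\times q\mathbb{Z}$. The point is that this auxiliary vector $(c,d)$ will serve as the second abelianization datum fed into Theorem~\ref{thm:main intro}, and the rectangular shape of the resulting lattice is exactly what makes its preimage in $F$ isomorphic to $F$.

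Before applying Theorem~\ref{thm:main intro} I must verify its two hypotheses, namely $\{a,c\}\neq\{0\}$ and $\{b,d\}\neq\{0\}$; this is the only point in the argument that needs justification. Since $p,q\geq 1$, the lattice $p\mathbb{Z}\times q\mathbb{Z}$ has finite index $pq$ in $\mathbb{Z}^2$, hence is of rank $2$. Projecting the equality $\la(a,b),(c,d)\ra=p\mathbb{Z}\times q\mathbb{Z}$ onto the first coordinate gives $\gcd(a,c)\mathbb{Z}=p\mathbb{Z}$, so $\gcd(a,c)=p\neq 0$ and in particular $a$ and $c$ are not both zero; projecting onto the second coordinate gives $\gcd(b,d)=q\neq 0$, so $b$ and $d$ are not both zero. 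Thus both hypotheses hold.

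Now Theorem~\ref{thm:main intro} supplies an element $g\in F$ with $\pi_{ab}(g)=(c,d)$ and $\la f,g\ra=\pi_{ab}^{-1}(\la(a,b),(c,d)\ra)=\pi_{ab}^{-1}(p\mathbb{Z}\times q\mathbb{Z})=F_{p,q}$. Since $\pi_{ab}$ is surjective, the index of $F_{p,q}$ in $F$ equals $[\mathbb{Z}^2:p\mathbb{Z}\times q\mathbb{Z}]=pq<\infty$, so $\la f,g\ra$ has finite index in $F$. Finally, by \cite{BW} every subgroup $F_{p,q}$ with $p,q\in\mathbb{N}$ is isomorphic to $F$, whence $\la f,g\ra\cong F$, which completes the argument. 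The substantive content of this corollary lies entirely in Theorem~\ref{thm:main intro} and in the cited results; within the deduction itself the only genuine step is the verification of the non-degeneracy hypotheses above, and even that reduces to an elementary projection computation, so I do not expect any real obstacle at this stage.
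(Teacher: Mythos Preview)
Your proof is correct and follows essentially the same route as the paper: apply Lemma~\ref{lem:linear algebra} to $(a,b)=\pi_{ab}(f)$ to obtain $(c,d)$ with $\la(a,b),(c,d)\ra=p\mathbb{Z}\times q\mathbb{Z}$, feed this into Theorem~\ref{thm:main intro}, and then invoke \cite{BW} for $F_{p,q}\cong F$. The only addition is that you spell out the verification of the hypotheses $\{a,c\}\neq\{0\}$ and $\{b,d\}\neq\{0\}$ via coordinate projection, a point the paper leaves implicit.
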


Note that Theorem \ref{thm:almost3/2}  shows that in some sense it is ``easy'' to generate Thompson's group $F$.  
Several other results demonstrate (in different ways) the abundance of generating pairs of Thompson's group $F$. Recall that in \cite{G-RG}, we prove that in the two  natural probabilistic models studied in \cite{CERT}, a random pair of elements of $F$ generates $F$ with positive probability. 
In \cite{GGJ}, Gelander, Juschenko and the author proved that Thomspon's group $F$ is invariably generated by $3$ elements (i.e., there are $3$ elements $f_1,f_2,f_3\in F$ such that regardless of how each one of them is conjugated, together they generate $F$.) Using results from \cite{G22}, one can show that in fact, there is a  pair of elements in $F$ which invariably generates $F$.  


\section{Preliminaries}\label{s:FT}

\subsection{F as a group of homeomorphisms}

Recall that Thompson group $F$ is the group of all piecewise linear homeomorphisms of the interval $[0,1]$ with finitely many breakpoints where all breakpoints are  dyadic fractions and all slopes are integer powers of $2$.  
The group $F$ is generated by two functions $x_0$ and $x_1$ defined as follows \cite{CFP}.

\[
x_0(t) =
\begin{cases}
	2t &  \hbox{ if }  0\le t\le \frac{1}{4} \\
	t+\frac14       & \hbox{ if } \frac14\le t\le \frac12 \\
	\frac{t}{2}+\frac12       & \hbox{ if } \frac12\le t\le 1
\end{cases} 	\qquad	
x_1(t) =
\begin{cases}
	t &  \hbox{ if } 0\le t\le \frac12 \\
	2t-\frac12       & \hbox{ if } \frac12\le t\le \frac{5}{8} \\
	t+\frac18       & \hbox{ if } \frac{5}{8}\le t\le \frac34 \\
	\frac{t}{2}+\frac12       & \hbox{ if } \frac34\le t\le 1 	
\end{cases}
\]

The composition in $F$ is from left to right.

Every element of $F$ is completely determined by how it acts on the set $\zz$. Every number in $(0,1)$ can be described as $.s$ where $s$ is an infinite word in $\{0,1\}$. For each element $g\in F$ there exists a finite collection of pairs of (finite) words $(u_i,v_i)$ in the alphabet $\{0,1\}$ such that every infinite word in $\{0,1\}$ starts with exactly one of the $u_i$'s. The action of $F$ on a number $.s$ is the following: if $s$ starts with $u_i$, we replace $u_i$ by $v_i$. For example, $x_0$ and $x_1$  are the following functions:

\[
x_0(t) =
\begin{cases}
	.0\alpha &  \hbox{ if }  t=.00\alpha \\
	.10\alpha       & \hbox{ if } t=.01\alpha\\
	.11\alpha       & \hbox{ if } t=.1\alpha\
\end{cases} 	\qquad	
x_1(t) =
\begin{cases}
	.0\alpha &  \hbox{ if } t=.0\alpha\\
	.10\alpha  &   \hbox{ if } t=.100\alpha\\
	.110\alpha            &  \hbox{ if } t=.101\alpha\\
	.111\alpha                      & \hbox{ if } t=.11\alpha\
\end{cases}
\]
where $\alpha$ is any infinite binary word.

The group $F$ has the following finite presentation \cite{CFP}.
$$F=\la x_0,x_1\mid [x_0x_1^{-1},x_1^{x_0}]=1,[x_0x_1^{-1},x_1^{x_0^2}]=1\ra,$$ where $a^b$ denotes $b^{-1} ab$. 

\subsection{Elements of F as pairs of binary trees} \label{sec:tree}

Often, it is more convenient to describe elements of $F$ using pairs of finite binary trees (see \cite{CFP} for a detailed exposition). The considered binary trees are rooted \emph{full} binary trees; that is, each vertex is either a leaf or has two outgoing edges: a left edge and a right edge. A  \emph{branch} in a binary tree is a simple path from the root to a leaf. If every left edge in the tree is labeled ``0'' and every right edge is labeled ``1'', then a branch in $T$ has a natural binary label. We rarely distinguish between a branch and its label. 

Let $(T_+,T_-)$ be a pair of finite binary trees with the same number of leaves. The pair $(T_+,T_-)$ is called a \emph{tree-diagram}. Let $u_1,\dots,u_n$ be the (labels of) branches in $T_+$, listed from left to right. Let $v_1,\dots,v_n$ be the (labels of) branches in $T_-$, listed from left to right. We say that the tree-diagram $(T_+,T_-)$ has the \emph{pair of branches} $u_i\rightarrow v_i$ for $i=1,\dots,n$. The tree-diagram $(T_+,T_-)$ \emph{represents} the function $g\in F$ which takes binary fraction $.u_i\alpha$ to $.v_i\alpha$ for every $i$ and every infinite binary word $\alpha$. We also say that the element $g$ takes the branch $u_i$ to the branch $v_i$.
For a finite binary word $u$, we denote by $[u]$  the dyadic interval $[.u,.u1^{\mathbb{N}}]$. If $u\rightarrow v$ is a pair of branches of $(T_+,T_-)$, then $g$ maps the interval $[u]$ linearly onto $[v]$. 

A \emph{caret} is a binary tree composed of a root with two children. If $(T_+,T_-)$ is a tree-diagram and one attaches a caret to the $i^{th}$ leaf of $T_+$ and the $i^{th}$ leaf of $T_-$ then the resulting tree diagram is \emph{equivalent} to $(T_+,T_-)$ and represents the same function in $F$. The opposite operation is that of \emph{reducing} common carets. A tree diagram $(T_+,T_-)$ is called \emph{reduced} if it has no common carets; i.e, if there is no $i$ for which the  $i$ and ${i+1}$ leaves of both $T_+$ and $T_-$ have a common father. The reduced tree-diagrams of the generators $x_0$ and $x_1$ of $F$ are depicted in Figure \ref{fig:x0x1}.

\begin{figure}[ht]
	\centering
	\begin{subfigure}{.55\textwidth}
		\centering
		\includegraphics[width=.55\linewidth]{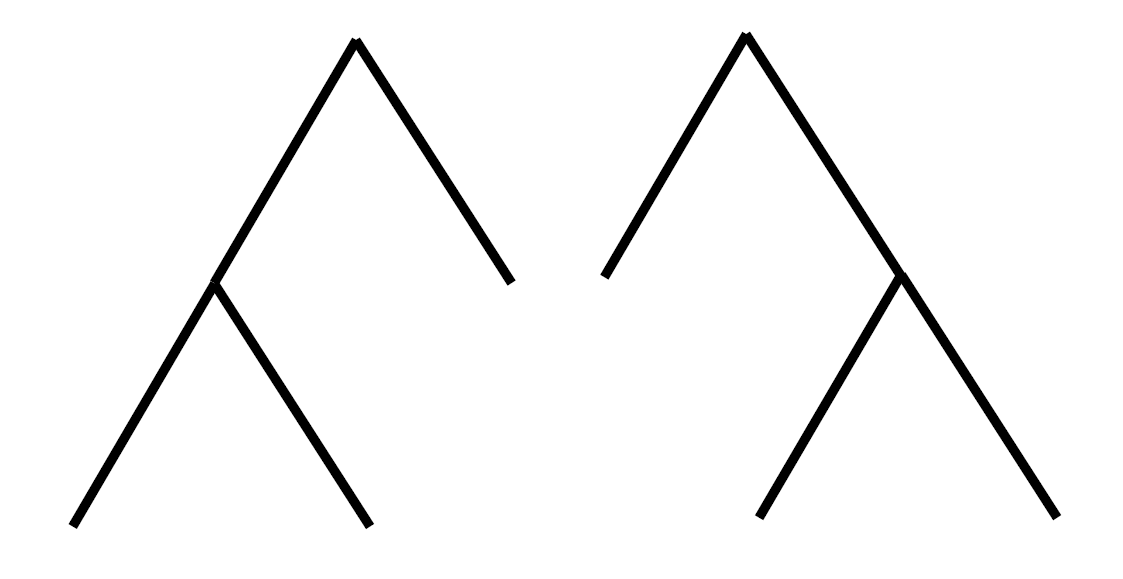}
		\caption{}
		\label{fig:x0}
	\end{subfigure}%
	\begin{subfigure}{.55\textwidth}
		\centering
		\includegraphics[width=.55\linewidth]{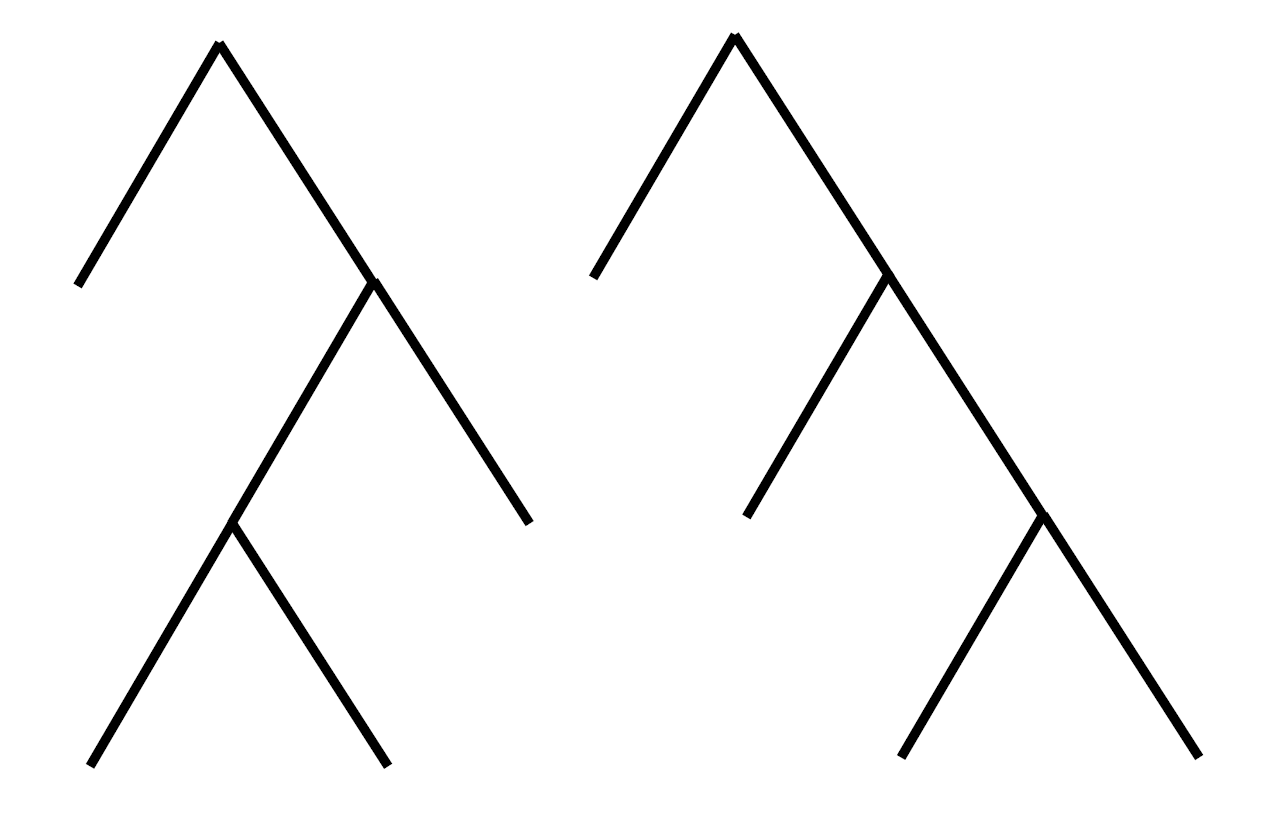}
		\caption{}
		\label{fig:x1}
	\end{subfigure}
	\caption{(A) The reduced tree-diagram of $x_0$. (B) The reduced tree-diagram of $x_1$. In both figures, $T_+$ is on the left and $T_-$ is on the right.}
	\label{fig:x0x1}
\end{figure}

 When we say that a function $f\in F$ has a pair of branches $u_i\rightarrow v_i$, the meaning is that some tree-diagram representing $f$ has this pair of branches. In other words, this is equivalent to saying that $f$ maps the dyadic interval $[u_i]$ linearly onto $[v_i]$.
Clearly, if $u\rightarrow v$ is a pair of branches of $f$, then for any finite binary word $w$, $uw\rightarrow vw$ is also a pair of branches of $f$. Similarly, if $f$ has the pair of branches $u\rightarrow v$ and $g$ has the pair of branches $v\rightarrow w$ then $fg$ has the pair of branches $u\rightarrow w$. 

Let $\mathcal B$ be the set of all finite binary words and let $u,v\in\mathcal B$. We say that $v$ is a \emph{descendant} of $u$ if $u$ is a strict prefix of $v$. We say that $u$ and $v$ are \emph{incomparable} if $u$ is not a prefix of $v$ and $v$ is not a prefix of $u$. Note that if $u$ and $v$ are incomparable then the interiors of $[u]$ and $[v]$ have empty intersection. In that case, we will write  $[u]<[v]$ if for every $x$ in the interior of $[u]$ and every $y$ in the interior of $[v]$ we have $x<y$. The following lemma will be useful. 

\begin{lemma}\label{lem:obvious}
	Let $f$ be a non-trivial element of $F$. Then there exist $u,v,w\in\mathcal B$ such that $[u]<[v]<[w]$ and such that $f$ or $f^{-1}$ has the pairs of branches $u\to v$ and $v\to w$. 
\end{lemma}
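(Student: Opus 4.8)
The plan is to read off the required pairs of branches from the forward dynamics of $f$: I will find a dyadic point that $f$ pushes steadily to the right, and take a short standard dyadic interval there whose first two images under $f$ are disjoint, increasing, and standard dyadic. As a first (purely topological) reduction, note that since $f$ is non-trivial its fixed-point set is a proper closed subset of $[0,1]$ containing $0$ and $1$, so the complement of the fixed-point set has a connected component $(a,b)\subseteq(0,1)$ on which $f$ has no fixed point. Because $f$ is an orientation-preserving homeomorphism with $f(a)=a$ and $f(b)=b$, it maps $(a,b)$ onto itself, and since $f-\mathrm{id}$ is continuous and non-vanishing on $(a,b)$ it has constant sign there. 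Replacing $f$ by $f^{-1}$ if necessary (which is permitted, as the statement asks for $f$ \emph{or} $f^{-1}$), I may assume $f(x)>x$ for every $x\in(a,b)$. Then for any $p\in(a,b)$ the three points $p,\,f(p),\,f^2(p)$ all lie in $(a,b)$ and satisfy $p<f(p)<f^2(p)$, since $f$ is increasing and pushes points of $(a,b)$ to the right.

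Next I would choose the base point and interval carefully. As $f$ has only finitely many breakpoints, I can pick a dyadic $p\in(a,b)$ such that neither $p$ nor $f(p)$ is a breakpoint of $f$; then $f$ is affine, with slope an integer power of $2$, on a neighbourhood of $p$ and on a neighbourhood of $f(p)$. I then set $[u]=[p,p+2^{-N}]$ for a large integer $N$. For $N$ large this is a standard dyadic interval lying inside the affine piece of $f$ around $p$, so $f$ maps $[u]$ linearly onto $[v]:=f([u])$; since $p$ is dyadic and the slope is a power of $2$, the endpoint $f(p)$ is dyadic and $[v]$ is again a standard dyadic interval once $N$ is large enough, so that $u\to v$ is a pair of branches of $f$. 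Enlarging $N$ further, $[v]$ lies inside the affine piece around $f(p)$, so $f$ maps $[v]$ linearly onto a standard dyadic interval $[w]:=f([v])=f^2([u])$, giving the second pair of branches $v\to w$.

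Finally I would check the ordering. The left endpoints of $[u],[v],[w]$ are $p,\,f(p),\,f^2(p)$ and their lengths tend to $0$ as $N\to\infty$; since $p<f(p)<f^2(p)$ are distinct, for $N$ large we get $\sup[u]<\inf[v]$ and $\sup[v]<\inf[w]$, i.e. $[u]<[v]<[w]$, which yields the required $u,v,w$. The only genuinely delicate point is the simultaneous bookkeeping of the largeness of $N$: a single $N$ must be chosen large enough to keep $[u]$ inside one affine piece, to keep $f([u])$ inside the affine piece around $f(p)$, to make both $f([u])$ and $f^2([u])$ standard dyadic intervals, and to separate the three intervals in the correct order. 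Each of these is an open condition that holds in the limit $N\to\infty$, so all of them hold for every sufficiently large $N$, and the proof goes through.
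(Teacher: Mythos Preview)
Your argument is correct. You find a component $(a,b)$ of the non-fixed set, arrange $f(x)>x$ there, pick a dyadic regular point $p$, and shrink a standard dyadic interval at $p$ until its first two $f$-images are again standard dyadic and pairwise separated. Each of the conditions you list is indeed open and holds for all large $N$, so the bookkeeping works.

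The paper takes a somewhat different, more explicit route. Instead of working at a generic interior point of a free component, it goes to the left boundary of the non-identity support: letting $\alpha$ be maximal with $f|_{[0,\alpha]}=\mathrm{id}$, one has $f'(\alpha^+)\neq 1$, and after passing to $f^{-1}$ if needed the slope at $\alpha^+$ exceeds $1$. This yields directly a pair of branches $s0^n\to s0^m$ with $\alpha=.s$ and $n>m$, and then one simply writes down
\[
u=s0^{\,2n-m}1,\qquad v=s0^{\,n}1,\qquad w=s0^{\,m}1,
\]
which visibly satisfy $[u]<[v]<[w]$ and $u\to v\to w$. The trade-off: the paper's argument is shorter and produces explicit words with no limiting process, which is convenient later when these $u,v,w$ are plugged into tree constructions; your dynamical argument is more flexible (it works at any non-fixed point, not just the leftmost one) and is conceptually transparent, at the cost of the $N$-large bookkeeping.
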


\begin{proof}
	Let $\alpha$ be the maximal number in $[0,1]$ such that $f$ fixes the interval $[0,\alpha]$ pointwise. Since $f$ is non-trivial, $\alpha<1$. 
	Note that $f'(\alpha^+)\neq 1$. If $f'(\alpha^+)>1$, let $g=f$, otherwise, let $g=f^{-1}$, so that $g'(\alpha^+)>1$.  Since $\alpha$ is a breakpoint of $f$, it must be dyadic. Hence, there exists a finite binary word $s$ 
	such that $\alpha=.s$. 
	 Since $g'(\alpha^+)> 1$, there exist $n>m$ in $\mathbb{N}$ such that $g$ has the pair of branches $.s0^n\to .s0^m$ (see \cite[Lemma 2.6]{G16}). 
	Now, let $u\equiv s0^{n+(n-m)}1$, $v\equiv s0^n1$ and $w\equiv s0^m1$. Then $[u]<[v]<[w]$ and the function $g$ has the pairs of branches $u\to v$ and $v\to w$, as necessary.  
\end{proof}

\subsection{The derived subgroup and the abelianization of $F$}\label{sec:derived}

Recall that the derived subgroup of $F$ is an infinite simple group which can be characterized as the subgroup of $F$ of all functions $f$ with slope $1$ both at $0^+$ and at $1^-$ (see \cite{CFP}). 
That is, a function $f\in F$ belongs to $[F,F]$ if and only if the reduced (equiv. any) tree-diagram of $f$ has pairs of branches of the form $0^m\rightarrow 0^m$ and $1^n\rightarrow 1^n$ for some $m,n\in\mathbb{N}$.

As noted above, a subgroup $H$ of $F$ is normal if and only if it contains the derived subgroup of $F$. In particular, every finite index subgroup of $F$ is a normal subgroup of $F$ \cite{CFP}. 
Recall that the abelianization of $F$ is isomorphic to $\mathbb{Z}^2$ and that the standard abelianization map $\pi_{ab}\colon F\to\mathbb{Z}^2$ maps an element $f\in F$ to $(\log_2(f'(0^+)),\log_2(f'(1^-)))$.
We make the following observation. 

\begin{remark}\label{rem:not finitely generated}
	Let $(a,b),(c,d)\in\mathbb{Z}^2$ be such that $a=c=0$ or $b=d=0$. Then the subgroup $\pi_{ab}^{-1}(\la (a,b),(c,d)\ra)\leq F$ is not finitely generated.
\end{remark}

\begin{proof}
	We prove it in the case where $a=c=0$. Note that in that case, $\la (a,b),(c,d)\ra=\la (0,g)\ra$, where $g=\gcd(b,d)$. Let $H=\pi_{ab}^{-1}(\la (0,g)\ra)$. We  claim that $H$ is not finitely generated. Assume by contradiction that $H$ is generated by $f_1,\dots,f_n$, for $n\in\mathbb{N}$. Then for each $i=1,\dots,n$ we have $f_i'(0^+)=1$. Hence, for each $i$, $f_i$ fixes a right neighborhood of $0$. Let $\alpha\in (0,1)$ be a small enough dyadic fraction such that for each $i=1,\dots,n$, $f_i$ fixes the interval $[0,\alpha]$ pointwise. Since $H$ is generated by $f_1,\dots,f_n$, it follows that $H$ fixes the interval $[0,\alpha]$ pointwise. But it is easy to construct an element $f\in F$ such that  $\pi_{ab}(f)=(0,g)$ and such that $f$ does not fix $\alpha$, a contradiction. 
\end{proof}

Recall that for every pair of elements $(a,b),(c,d)\in\mathbb{Z}^2$ the index of the subgroup $\la (a,b),(c,d)\ra$ of $\mathbb{Z}^2$ is the absolute value of the determinant $ad-bc$ (where if $ad-bc=0$, the index is infinite). The following simple lemma was referred to in the introduction. As we were unable to find a reference for it, we provide a proof.

\begin{lemma}\label{lem:linear algebra}
	Let $(a,b)\in\mathbb{Z}^2$ be non-trivial. Then there exists $(c,d)\in\mathbb{Z}^2$ and $p,q\in\mathbb{N}$ such that 
	$\la (a,b),(c,d)\ra=p\mathbb{Z}\times q\mathbb{Z}$ and such that $pq=\gcd(a,b)$. 
\end{lemma}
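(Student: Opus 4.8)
The plan is to split $g = \gcd(a,b)$ into a product $g = pq$ in such a way that $(a,b)$ becomes a \emph{primitive} vector of the rectangular lattice $p\mathbb{Z}\times q\mathbb{Z}$, and then to use B\'ezout to complete $(a,b)$ to a basis of that lattice. Transporting a basis back through the obvious coordinate rescaling will produce the desired $(c,d)$.

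First I would dispose of the degenerate cases $a=0$ or $b=0$. If, say, $b=0$ and $a\neq 0$, then $g=|a|$ and one checks directly that $\langle (a,0),(0,1)\rangle = |a|\mathbb{Z}\times\mathbb{Z}$, so $p=|a|$, $q=1$, $(c,d)=(0,1)$ work; the case $a=0$ is symmetric. This lets me assume $a,b\neq 0$ in the main argument, so that all $\ell$-adic valuations below are finite and only finitely many are nonzero.

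For the main case the key step is the choice of $p$ and $q$. Writing $v_\ell(\cdot)$ for the $\ell$-adic valuation, I would set
$$ p = \prod_{v_\ell(a)\le v_\ell(b)} \ell^{\,v_\ell(a)}, \qquad q = \prod_{v_\ell(a)> v_\ell(b)} \ell^{\,v_\ell(b)}, $$
the products ranging over primes $\ell$. A valuation-by-valuation inspection then yields the four facts I need: $p\mid a$ and $q\mid b$ (so $(a,b)\in p\mathbb{Z}\times q\mathbb{Z}$); $v_\ell(pq)=\min(v_\ell(a),v_\ell(b))=v_\ell(g)$, hence $pq=g=\gcd(a,b)$; and $\min\bigl(v_\ell(a/p),v_\ell(b/q)\bigr)=0$ for every $\ell$, i.e.\ $\gcd(a/p,b/q)=1$. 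This last coprimality is the crux of the lemma and the step I expect to be the main obstacle: a careless factorization of $g$ (for instance $p=g$, $q=1$) generally leaves $(a,b)$ imprimitive in the box, and the argument collapses. The point of the formula above is precisely to route each prime power $\ell^{\min(v_\ell(a),v_\ell(b))}$ of $g$ into the coordinate where it is ``used up,'' guaranteeing primitivity.

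Finally I would transport the problem across the group isomorphism $\phi\colon \mathbb{Z}^2 \to p\mathbb{Z}\times q\mathbb{Z}$, $(x,y)\mapsto (px,qy)$. Since $\gcd(a/p,b/q)=1$, B\'ezout gives $c',d'\in\mathbb{Z}$ with $(a/p)d'-(b/q)c'=1$, so the integer matrix with rows $(a/p,b/q)$ and $(c',d')$ lies in $\mathrm{SL}_2(\mathbb{Z})$ and these two vectors form a basis of $\mathbb{Z}^2$. Applying $\phi$, the vectors $(a,b)$ and $(c,d):=(pc',qd')$ form a basis of $p\mathbb{Z}\times q\mathbb{Z}$, whence $\langle (a,b),(c,d)\rangle = p\mathbb{Z}\times q\mathbb{Z}$ with $pq=\gcd(a,b)$, as required. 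Everything after the choice of $(p,q)$ is standard lattice bookkeeping.
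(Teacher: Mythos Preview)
Your proof is correct and follows essentially the same route as the paper's: both dispose of the degenerate cases, then split $g=\gcd(a,b)$ as $p\cdot q$ prime by prime so that $a/p$ and $b/q$ are coprime (your valuation formula yields exactly the same $p,q$ as the paper's rule of sending a prime power $p_i^{n_i}$ of $g$ into $q$ when $p_i\mid a/g$ and into $p$ otherwise), and then invoke B\'ezout to produce the second generator $(c,d)=(pc',qd')$. The only cosmetic difference is the final bookkeeping: you transport a basis through the rescaling isomorphism $\phi\colon\mathbb{Z}^2\to p\mathbb{Z}\times q\mathbb{Z}$, whereas the paper checks $\la (a,b),(c,d)\ra\le p\mathbb{Z}\times q\mathbb{Z}$ and compares indices in $\mathbb{Z}^2$.
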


\begin{proof}
	If $a=0$ then for $(c,d)=(1,0)$ and $p=1$, $q=|b|$ we have the result. Indeed, $$\la (a,b),(c,d)\ra=\la (0,b),(1,0)\ra=\mathbb{Z}\times |b|\mathbb{Z}.$$ Similarly, if $b=0$, we are done. Hence, we can assume that $a,b\neq 0$. 
	Let $g=\gcd(a,b)$ and let $a',b'\in\mathbb{Z}$ be such that $a=ga'$ and $b=gb'$. Clearly, $\gcd(a',b')=1$. Let $g=p_1^{n_1}\cdots p_m^{n_m}$ be the prime factorization of $g$. Since $\gcd(a',b')=1$, each of the primes $p_i$ for $i=1,\dots,m$ divides at most one of the numbers $a'$ or $b'$. Let $q$ be the product of all $p_i^{n_i}$, $i=1,\dots,m$ such that $p_i$ divides $a'$ (if there are no such $p_i$'s, we let $q=1$). Let $p\in\mathbb{N}$ be such that $g=pq$ (in other words, $p$ is the product of all $p_i^{n_i}$ such that $p_i$ does not divide $a'$). By construction, $q$ and $b'$ are co-prime, $p$ and $a'$ are co-prime and $p$ and $q$ are co-prime. It follows that $qa'$ and $pb'$ are co-prime. Hence, by the Euclidean algorithm there exist $m,n\in\mathbb{Z}$ such that $m(qa')-n(pb')=1$. Let $(c,d)=(np,mq)$. 
	Then the index of the subgroup $\la (a,b),(c,d)\ra$ in $\mathbb{Z}^2$ is 
	$$|ad-bc|=|(ga')(mq)-(gb')(np)|=g|mqa'-npb'|=g=pq.$$
	On the other hand, since $p$ divides both $a=ga'$ and $c=np$, we have that $p|\gcd(a,c)$.  Similarly, $q$ divides $\gcd(b,d)$. Hence, 
	$$\la (a,b),(c,d)\ra\leq \gcd(a,c)\mathbb{Z}\times \gcd(b,d)\mathbb{Z}\leq p\mathbb{Z}\times q\mathbb{Z}.$$
	Since $p\mathbb{Z}\times q\mathbb{Z}$ is an over-group of $\la (a,b),(c,d)\ra$ and they both have index $pq$ in $\mathbb{Z}^2$, they must coincide. Hence, 
	$$\la (a,b),(c,d)\ra= p\mathbb{Z}\times q\mathbb{Z},$$
	as necessary. 
\end{proof}

\subsection{Generating normal subgroups of $F$}

Let $H$ be a subgroup of $F$. A function $f\in F$ is said to be a \emph{piecewise-$H$} function if there is a finite subdivision of the interval $[0,1]$ such that on each interval in the subdivision, $f$ coincides with some function in $H$. Note that since all breakpoints of elements in $F$ are dyadic fractions, a function $f\in F$ is a piecewise-$H$ function if and only if there is a  dyadic subdivision of the interval $[0,1]$ into finitely many pieces such that on each dyadic interval in the subdivision, $f$ coincides with some function in $H$. 

Following \cite{GS,G16}, we define the \emph{closure} of a subgroup $H$ of $F$, denoted $\Cl(H)$, to be the subgroup of $F$ of all piecewise-$H$ functions. A subgroup $H$ of $F$ is \emph{closed} if $H=\Cl(H)$. In \cite{G16}, we gave the following characterization of subgroups of $F$ which contain the derived subgroup of $F$ (as well as an algorithm for determining if a finitely generated subgroup of $F$ contains the derived subgroup of $F$).

\begin{theorem}\cite[Theorem 7.10]{G16}\label{gen}
	Let $H$ be a subgroup of $F$. Then $H$ contains the derived subgroup of $F$ (equiv. $H$ is a normal subgroup of $F$) if and only if the following conditions hold. 
	\begin{enumerate}
		\item[(1)] $\Cl(H)$ contains the derived subgroup of $F$. 
		\item[(2)] There is an element $h\in H$ and a dyadic fraction $\alpha\in (0,1)$ such that $h$ fixes $\alpha$, $h'(\alpha^-)=1$ and 
		$h'(\alpha^+)=2$.
	\end{enumerate}
\end{theorem}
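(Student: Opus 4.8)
Write $D=[F,F]$ for the derived subgroup, and recall the facts recorded above: $D$ is simple, and $D$ is exactly the set of $f\in F$ that fix a neighbourhood of $0$ and a neighbourhood of $1$ pointwise (equivalently, whose reduced tree-diagram has pairs of branches $0^m\to 0^m$ and $1^n\to 1^n$). Consequently $D$ is generated by the subgroups $F_{[w]}$ of elements supported on a single dyadic interval $[w]$ with $[w]$ bounded away from $0$ and $1$ (i.e. $w$ is neither $0^k$ nor $1^k$); this is a standard fragmentation argument, and any two such intervals are carried to one another linearly by some element of $D$. The necessity of the two conditions is immediate: if $D\subseteq H$ then $D\subseteq H\subseteq\Cl(H)$, which is (1), and for (2) I would exhibit an explicit $h\in D$, for instance a nontrivial element supported on $[\tfrac12,\tfrac34]$ with slope $2$ at $(\tfrac12)^+$, so that $h$ fixes $\alpha=\tfrac12$ with $h'(\alpha^-)=1$ and $h'(\alpha^+)=2$.

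\textbf{The transport observation.} For sufficiency, assume (1) and (2); the plan is to prove $D\subseteq H$. The single clean tool I would isolate first is that conjugation by a \emph{piecewise}-$H$ element can be carried out inside $H$ provided the conjugated element has small support: if $\psi\in H$ is supported on a dyadic interval $[w]$ and $g\in\Cl(H)$ agrees with one $H$-element $k$ on a neighbourhood of $[w]\cup g([w])$, then $g\psi g^{-1}=k\psi k^{-1}\in H$. Since condition (1) makes every element of $D$ piecewise-$H$, and $D$ acts transitively on the dyadic intervals bounded away from $\{0,1\}$, this lets me spread any localized element of $H$ to every such interval while staying inside $H$ — crucially, \emph{without} first knowing that $D$ normalizes $H$. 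Thus the transport observation together with fragmentation reduces the whole theorem to a single local statement: that $H$ contains enough honest (not merely piecewise) elements of $D$ supported on one fixed small dyadic interval $[w_0]$ to generate $F_{[w_0]}\cap D$.

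\textbf{De-piecewising via the germ of (2).} The remaining task is therefore to promote the piecewise-$H$ membership coming from (1) to genuine membership in $H$ on $[w_0]$, and this is exactly where (2) enters. The point is that the element $h$ of (2) has an \emph{honest} nontrivial germ at the dyadic point $\alpha$: it is genuinely the identity on a left-neighbourhood $[\alpha-\delta,\alpha]$ and genuinely affine with slope $2$ on a right-neighbourhood $[\alpha,\alpha+\delta']$. I would use this honest local data, combined with commutators against piecewise-$H$ elements supplied by (1) and the transport observation, to peel off honest elements of $H\cap D$ supported on arbitrarily small dyadic intervals inside the slope-$2$ region of $h$; transporting these by elements of $D$ then yields elements of $H$ supported on any prescribed small interval, and the slope adjustments needed at the endpoints of $F_{[w_0]}$ are produced by transported copies of the same germ. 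Fragmentation then assembles all of $D$ inside $H$.

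\textbf{Main obstacle.} The hard part will be precisely this de-piecewising step: condition (1) only certifies that elements of $D$ are \emph{piecewise}-$H$, and the discrepancy between a piecewise-$H$ function and an honest element of $H$ is concentrated at the breakpoints, where it is measured by slope jumps. Condition (2) provides a single element carrying a nontrivial one-caret slope jump (factor $2$) at a dyadic point, and the crux is to show that conjugates, commutators and transports of this one element suffice to correct every such jump; the specific value $h'(\alpha^+)/h'(\alpha^-)=2$ matters because it is one caret's worth of local distortion, the atomic move generating the combinatorics of the simple group $D$. A continuity (jump-free) version of the hypothesis would leave this step without its essential generator, which is exactly why (1) alone is insufficient and (2) must be imposed.
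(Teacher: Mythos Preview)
This theorem is not proved in the present paper; it is quoted as \cite[Theorem 7.10]{G16} and invoked as a black box in the proof of Proposition~\ref{prop:main}. There is therefore no in-paper argument to compare your proposal against.

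On its own merits, your outline is reasonable but incomplete. The necessity direction and the transport observation are correct; in fact, if $\psi\in H$ is supported in $[w]$ and $g\in\Cl(H)$ agrees with a single $k\in H$ on $[w]$, then already $g\psi g^{-1}=k\psi k^{-1}\in H$, so agreement is needed only on $[w]$, not on $[w]\cup g([w])$. The reduction of the sufficiency direction to manufacturing one honest element of $H\cap D$ with small support is also a sound strategy. What is missing is precisely the step you flag as the main obstacle: you assert that commutators of $h$ against piecewise-$H$ elements will ``peel off'' localized elements of $H\cap D$, but you do not exhibit any such commutator or verify that it lands in $H$ rather than merely in $\Cl(H)$. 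The element $h$ from (2) need not lie in $D$ and need not have small support, and your transport observation cannot be invoked until you already possess a localized element of $H$; so there is a genuine bootstrapping problem to solve, and ``combine with commutators'' is a statement of intent rather than an argument. Until that step is written out explicitly, what you have is a plausible plan of attack, not a proof.
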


Note that if a subgroup $H$ of $F$ contains the derived subgroup of $F$, then the image of $H$ in the abelianization of $F$ completely determines the subgroup $H$. 

\section{Subgroups $H$ of $F$ whose closure is a normal subgroup of $F$}

In the next section, we apply Theorem \ref{gen} to prove that a given subset of $F$ generates a normal subgroup $H$ of $F$. 
To do so, we will have to prove in particular that $H$ satisfies Condition (1) of Theorem \ref{gen}, i.e., that the closure of $H$ contains the derived subgroup of $F$. To that end, with each subgroup of $F$, we associate an equivalence relation on the set of finite binary words $\mathcal B$.

\begin{definition}
	Let $H$ be a subgroup of $F$.
 The \emph{equivalence relation induced by $H$} on the set of finite binary words $\mathcal B$, denoted $\sim_H$, is defined as follows. For every pair of finite binary words $u,v\in\mathcal B$ we have $u\sim_H v$ if and only if there is an element in $H$ with the pair of branches $u\to v$.
\end{definition}

Note that if $u\sim_Hv$ then for every finite binary word $w$, we have $uw\sim_H vw$. 
Moreover, if $H$ is closed, we have the following. 

\begin{lemma}\label{coherent}
	Let $H$ be a closed subgroup of $F$. Then for every pair of finite binary words $u,v\in\mathcal B$ we have $u\sim_H v$ if and only if $u0\sim_H v0$ and $u1\sim_H v1$. 
\end{lemma}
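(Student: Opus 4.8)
The forward implication requires no work beyond the observation recorded just before the statement: if $u\sim_H v$ then $uw\sim_H vw$ for every $w\in\mathcal B$, and specialising to $w=0$ and $w=1$ gives $u0\sim_H v0$ and $u1\sim_H v1$ at once. Note that this direction does not use that $H$ is closed; closedness will enter only in the converse.

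For the converse, suppose $u0\sim_H v0$ and $u1\sim_H v1$, and choose witnesses $a,b\in H$: so $a$ has the pair of branches $u0\to v0$, i.e.\ $a$ maps $[u0]$ linearly onto $[v0]$, and $b$ has the pair of branches $u1\to v1$, mapping $[u1]$ linearly onto $[v1]$. The plan is to manufacture a single element of $H$ with the pair of branches $u\to v$ by gluing $a$ and $b$ along the midpoint $.u1$ of $[u]$. Concretely, I would set $f$ equal to $a$ on $[0,.u1]$ and equal to $b$ on $[.u1,1]$. The point of splitting exactly at $.u1$ is that $.u1$ is simultaneously the right endpoint of $[u0]$ and the left endpoint of $[u1]$, while $.v1$ is simultaneously the right endpoint of $[v0]$ and the left endpoint of $[v1]$; hence $a(.u1)=.v1=b(.u1)$, so the two pieces agree at the seam.

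It then remains to verify three things. First, that $f\in F$: since $a$ carries $[0,.u1]$ increasingly onto $[0,.v1]$ and $b$ carries $[.u1,1]$ increasingly onto $[.v1,1]$, and the two agree at $.u1$, the function $f$ is an increasing homeomorphism of $[0,1]$ whose breakpoints are dyadic and whose slopes are powers of $2$ (the only new breakpoint, $.u1$, is dyadic). Second, that $f\in H$: the two-piece dyadic subdivision at $.u1$ exhibits $f$ as a piecewise-$H$ function, coinciding with $a\in H$ on the left piece and with $b\in H$ on the right, so $f\in\Cl(H)=H$ because $H$ is closed. Third, that $f$ has the pair of branches $u\to v$: on $[u0]$ it coincides with $a$ and on $[u1]$ with $b$, both of which have slope $2^{|u|-|v|}$ there, and the two halves join continuously at $.u1$; therefore $f$ is linear on all of $[u]=[u0]\cup[u1]$ and maps it onto $[v0]\cup[v1]=[v]$. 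This gives $u\sim_H v$.

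The only genuinely delicate point is the gluing in the converse: one must check that $a$ and $b$ fit together into a bona fide homeomorphism of $[0,1]$ rather than merely a piecewise map of $[u]$. This hinges entirely on the coincidence $a(.u1)=b(.u1)=.v1$, together with the fact that $a$ already transports the exterior $[0,.u]$ correctly onto $[0,.v]$ and $b$ transports $[.u1^{\mathbb{N}},1]$ correctly onto $[.v1^{\mathbb{N}},1]$; these are exactly the features that let me avoid needing any further elements of $H$ to control the complement of $[u]$.
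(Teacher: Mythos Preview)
Your argument is correct and is essentially the paper's own proof: you pick witnesses for $u0\sim_H v0$ and $u1\sim_H v1$, glue them at the dyadic point $.u1=.u01^{\mathbb N}$ (where they necessarily agree), observe that the result is a piecewise-$H$ element of $F$ and hence lies in $H=\Cl(H)$, and conclude that it carries $[u]$ linearly onto $[v]$. The only cosmetic remark is that in your final paragraph the phrase ``$a$ transports the exterior $[0,.u]$ onto $[0,.v]$'' is slightly off---the relevant fact, which you already stated correctly earlier, is that $a$ carries $[0,.u1]$ onto $[0,.v1]$ and $b$ carries $[.u1,1]$ onto $[.v1,1]$.
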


\begin{proof}
	Assume that $u0\sim_H v0$ and $u1\sim_Hv1$. We claim that there is an element in $H$ with the pair of branches $u\to v$. 
	Since $u0\sim_H v0$, there is an element $h_1\in H$ with the pair of branches $u0\to v0$.
	Similarly, since $u1\sim_H v1$, there is an element  $h_2\in H$ with the pair of branches $u1\to v1$. Let $\alpha=.u01^\mathbb{N}=.u1$ and $\beta=.v01^\mathbb{N}=.v1$. Then $h_1(\alpha)=\beta$ and $h_2(\alpha)=\beta$. Let $h_3\in F$ be the following function
	\[h_3(x)=\begin{cases}
		h_1(x) & x\in [0,\alpha]\\
		h_2(x) & x\in [\alpha,1]
	\end{cases}
	\]
	(Since $\alpha$ is dyadic, $h_3$ is indeed in $F$.) By construction, $h_3$ is a piecewise-$H$ function. 
	 Hence, since $H$ is closed, the element $h_3\in H$. But the element $h_3$ has the pairs of branches $u0\to v0$ and $u1\to v1$. Hence, $h_3$ maps the interval $[u0]$ linearly onto $[v0]$ and the interval $[u1]$ linearly onto $[v1]$. It follows that $h_3$ maps the interval $[u]$ linearly onto the interval $[v]$. Hence, it has the pair of branches $u\to v$. Therefore, $u\sim_H v$. 
\end{proof}


\begin{remark}
	An equivalence relation $\sim$ on the set of finite binary words $\mathcal B$ such that for every $u,v\in\mathcal B$ we have $u\sim v$ if and only if $u0\sim v0$ and $u1\sim v1$ is called a \emph{coherent} equivalence relation in \cite{BBQS}. Coherent equivalence relations were used in \cite{BBQS} to study maximal subgroups of Thompson's group $V$. 
\end{remark}

The following corollary follows from Lemma \ref{coherent} by induction on $k$. 

\begin{corollary}\label{cor:coherent}
	Let $H$ be a closed subgroup of $F$. Let $u,v\in\mathcal B$ and let $k\in\mathbb{N}$. Assume that for every finite binary word $w$ of length $k$ we have $uw\sim_H vw$. Then $u\sim_H v$. 
\end{corollary}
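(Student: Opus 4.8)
The plan is to prove Corollary \ref{cor:coherent} by induction on $k$, using Lemma \ref{coherent} as the inductive engine. The base case $k=0$ is trivial: the hypothesis says that for every word $w$ of length $0$ (namely the empty word) we have $uw\sim_H vw$, which is exactly $u\sim_H v$.

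For the inductive step, suppose the statement holds for $k$, and assume that for every binary word $w$ of length $k+1$ we have $uw\sim_H vw$. The key observation is that every word $w'$ of length $k$ can be extended to exactly two words of length $k+1$, namely $w'0$ and $w'1$. So for any fixed $w'$ of length $k$, applying the hypothesis to the two length-$(k+1)$ words $w'0$ and $w'1$ gives $u(w'0)\sim_H v(w'0)$ and $u(w'1)\sim_H v(w'1)$; rewriting these as $(uw')0\sim_H (vw')0$ and $(uw')1\sim_H (vw')1$, Lemma \ref{coherent} (applied to the pair $uw', vw'$) yields $uw'\sim_H vw'$. Since $w'$ was an arbitrary word of length $k$, we have shown that $uw'\sim_H vw'$ for every word $w'$ of length $k$, and the inductive hypothesis then gives $u\sim_H v$.

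The only mild subtlety is purely bookkeeping: one must be careful that the hypothesis is applied to the \emph{correct} pair of descendant words and that Lemma \ref{coherent} is invoked with the correct base pair $(uw', vw')$ rather than $(u,v)$. There is no real obstacle here — the induction collapses the problem one level at a time, peeling off the last letter of each length-$(k+1)$ word and folding two equivalences into one via coherence. I expect the whole argument to be a short, clean induction with essentially no calculation, the ``hard'' part being only to phrase the reindexing $w = w'0$ or $w = w'1$ transparently.

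\begin{proof}
	We argue by induction on $k$. For $k=0$, the only binary word of length $0$ is the empty word, so the hypothesis reads $u\sim_H v$, which is the conclusion.

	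Suppose the statement holds for $k$ and assume that $uw\sim_H vw$ for every binary word $w$ of length $k+1$. Fix an arbitrary binary word $w'$ of length $k$. Applying the hypothesis to the length-$(k+1)$ words $w'0$ and $w'1$, we obtain
	$$(uw')0=u(w'0)\sim_H v(w'0)=(vw')0\quad\text{and}\quad (uw')1=u(w'1)\sim_H v(w'1)=(vw')1.$$
	By Lemma \ref{coherent} applied to the pair $uw',vw'\in\mathcal B$, it follows that $uw'\sim_H vw'$. Since $w'$ was an arbitrary binary word of length $k$, the inductive hypothesis yields $u\sim_H v$, completing the induction.
\end{proof}
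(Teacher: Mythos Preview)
Your proof is correct and follows exactly the approach indicated in the paper, which simply states that the corollary follows from Lemma~\ref{coherent} by induction on $k$; you have spelled out that induction in full detail.
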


Let $\mathcal B'$ be the set of all finite binary words which contain both digits ``0'' and ``1''. In other words, $\mathcal B'=\mathcal B\setminus\{\emptyset, 0^n,1^n\mid n\in\mathbb{N}\}$. 

\begin{lemma}\label{lem:all inner words identified}
	Let $H$ be a closed subgroup of $F$. Assume that for every pair of finite binary words $u,v\in\mathcal B'$ we have $u\sim_H v$. Then $H$ contains the derived subgroup of $F$. 
\end{lemma}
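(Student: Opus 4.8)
The plan is to prove the stronger-looking but equivalent statement $[F,F]\subseteq H$ directly, by showing that every $f\in[F,F]$ is a piecewise-$H$ function and then invoking the closedness hypothesis $H=\Cl(H)$ to conclude $f\in H$. Using the characterization of $[F,F]$ recalled in Subsection~\ref{sec:derived}, any $f\in[F,F]$ admits a tree-diagram $(T_+,T_-)$ whose leftmost pair of branches is $0^m\to 0^m$ and whose rightmost pair is $1^k\to 1^k$ for some $m,k\in\mathbb{N}$; I would fix such a tree-diagram and analyze its pairs of branches $u_i\to v_i$ one at a time.

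The combinatorial heart of the argument is the observation that in any finite binary tree the leftmost branch is the all-zero word (of length equal to the depth of the leftmost leaf) and the rightmost branch is the all-one word, while every remaining branch contains both digits, i.e.\ lies in $\mathcal B'$. Indeed, if an intermediate branch were a pure power $0^j$ it would be comparable with the leftmost branch $0^m$, contradicting the prefix-code property of the leaves; the same reasoning rules out pure powers $1^j$ among the intermediate branches. Applying this to $T_+$ and to $T_-$ separately, and using that $f$ has slope $1$ at $0^+$ and at $1^-$ to force the exponents on the two extreme pairs to match (so they are genuinely $0^m\to 0^m$ and $1^k\to 1^k$), I conclude that each pair of branches $u_i\to v_i$ of $f$ is either one of the two extreme pairs $w\to w$ with $w\in\{0^m,1^k\}$, or an intermediate pair with $u_i,v_i\in\mathcal B'$.

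It then follows that $u_i\sim_H v_i$ for every $i$: the extreme pairs $w\to w$ are realized by the identity element of $H$ (which has the pair of branches $w\to w$ for every $w$), and the intermediate pairs are covered directly by the hypothesis, since $u_i,v_i\in\mathcal B'$. Hence for each $i$ there is $h_i\in H$ with the pair of branches $u_i\to v_i$. Because both $h_i$ and $f$ map the dyadic interval $[u_i]$ linearly and increasingly onto $[v_i]$, they coincide on $[u_i]$; as the intervals $[u_i]$ form a dyadic subdivision of $[0,1]$, this exhibits $f$ as a piecewise-$H$ function. Closedness of $H$ then gives $f\in\Cl(H)=H$, and since $f\in[F,F]$ was arbitrary, $H$ contains the derived subgroup.

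The argument is short once the combinatorial observation is in hand, and I expect that observation — together with the verification that the two extreme pairs have equal exponents (which is exactly where $f\in[F,F]$ is used) and that no intermediate branch is a pure power — to be the only point needing genuine care; everything else is routine. It is worth noting that this route uses the closedness of $H$ but does \emph{not} require the coherence lemmas (Lemma~\ref{coherent} and Corollary~\ref{cor:coherent}), since the hypothesis that all of $\mathcal B'$ is a single $\sim_H$-class already supplies every non-trivial equivalence needed.
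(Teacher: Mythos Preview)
Your argument is correct and is essentially the same as the paper's proof: take $f\in[F,F]$, write its tree-diagram with extreme pairs $0^m\to 0^m$ and $1^n\to 1^n$, observe that all intermediate pairs lie in $\mathcal B'\times\mathcal B'$ so are witnessed by some $h_i\in H$, and conclude that $f$ is piecewise-$H$ and hence lies in $H=\Cl(H)$. You spell out in more detail why the intermediate branches cannot be pure powers of $0$ or $1$ (the paper simply asserts this), but otherwise the proofs match.
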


\begin{proof}
	 Let $f\in [F,F]$ . Then the reduced tree-diagram of $f$ consists of pairs of branches 
	\[
	f :
	\begin{cases}
		0^m & \rightarrow 0^m\\
		u_i  & \rightarrow v_i \mbox{ for } i=1,\dots,k \\
		1^n & \rightarrow 1^n\\
	\end{cases}
	\]
	where $k,m,n\in\mathbb{N}$ and where for each $i=1,\dots,k$, the binary words $u_i$ and $v_i$ contain both digits $``0"$ and $``1"$ and as such belong to $\mathcal B'$. By assumption, for each $i=1,\dots,k$ we have $u_i\sim_H v_i$ and as such there is an element $h_i\in H$ with the pair of branches $u_i\rightarrow v_i$. Then $h_i$ coincides with $f$ on the interval $[u_i]$. We note also that $f$ coincides with the identity function {\bfseries{1}} $\in H$ on $[0^m]$ and on $[1^n]$. Since $[0^m],[u_1],\dots,[u_k],[1^n]$ is a subdivision of the interval $[0,1]$ and on each of these intervals $f$ coincides with a function in $H$, $f$ is a piecewise-$H$ function. Since $H$ is closed, $f\in H$. 
\end{proof}

\begin{lemma}\label{lem:identified of length k}
	Let $H$ be a subgroup of $F$. Assume that there is $k\in\mathbb{N}$ such that for every $u,v\in\mathcal B'$ and every $s\in\mathcal B$ such that the length $|s|=k$ we have $us\sim_H vs$. Then $\Cl(H)$ contains the derived subgroup of $F$.  
\end{lemma}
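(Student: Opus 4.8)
The plan is to reduce the claim to the already-established Lemma \ref{lem:all inner words identified} by passing to the closed subgroup $\Cl(H)$. That lemma shows a \emph{closed} subgroup contains $[F,F]$ as soon as all words in $\mathcal B'$ are identified by its induced equivalence relation. Since $\Cl(H)$ is itself closed (a piecewise-$\Cl(H)$ function is piecewise-$H$ after refining the subdivision, so $\Cl(\Cl(H))=\Cl(H)$), it suffices to prove that $u\sim_{\Cl(H)} v$ for every pair $u,v\in\mathcal B'$.

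First I would record the harmless monotonicity $u\sim_H v\Rightarrow u\sim_{\Cl(H)} v$: since $H\leq\Cl(H)$, any element of $H$ realizing the pair of branches $u\to v$ also lies in $\Cl(H)$. Combined with the hypothesis of the lemma, this gives $us\sim_{\Cl(H)} vs$ for all $u,v\in\mathcal B'$ and all $s\in\mathcal B$ with $|s|=k$.

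The heart of the argument is then to ``climb back up'' from depth $k$ to $u$ and $v$ themselves. Fix $u,v\in\mathcal B'$. As $s$ ranges over all binary words of length $k$, the words $us$ (resp.\ $vs$) run over exactly the descendants of $u$ (resp.\ $v$) at depth $k$, and by the previous step $us\sim_{\Cl(H)} vs$ for every such $s$. Since $\Cl(H)$ is closed, Corollary \ref{cor:coherent} applies with this value of $k$ and yields $u\sim_{\Cl(H)} v$. As $u,v\in\mathcal B'$ were arbitrary, the relation $\sim_{\Cl(H)}$ identifies all of $\mathcal B'$, and Lemma \ref{lem:all inner words identified}, applied to the closed subgroup $\Cl(H)$, gives that $\Cl(H)$ contains the derived subgroup of $F$, as required.

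I do not expect a serious obstacle here: the proof is a direct chaining of Corollary \ref{cor:coherent} (itself the iterated form of the coherence Lemma \ref{coherent}) with Lemma \ref{lem:all inner words identified}. The only points needing care are the two routine observations that $\Cl(H)$ is closed and that $\sim_H$ refines $\sim_{\Cl(H)}$; the single substantive move is the coherence step that lifts the depth-$k$ identifications up to the roots $u,v$.
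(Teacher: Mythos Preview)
Your proposal is correct and follows essentially the same route as the paper: pass to $\Cl(H)$, use that $\sim_H\subseteq\sim_{\Cl(H)}$, invoke Corollary~\ref{cor:coherent} to lift the depth-$k$ identifications to $u\sim_{\Cl(H)}v$ for all $u,v\in\mathcal B'$, and conclude via Lemma~\ref{lem:all inner words identified}. The only extra content you provide is the (routine) justification that $\Cl(H)$ is closed, which the paper takes for granted.
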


\begin{proof}
	Since $H$ is contained in $\Cl(H)$, the equivalence relation $\sim_H$ is contained in $\sim_{\Cl(H)}$. By Lemma \ref{lem:all inner words identified}, to prove that $\Cl(H)$ contains the derived subgroup of $F$ it suffices to prove that for every $u,v\in\mathcal B'$ we have $u\sim_{\Cl(H)} v$. Let $u,v\in \mathcal B'$. By assumption, there exists $k\in\mathbb{N}$ such that for every finite binary word $s$ of length $k$ we have $us\sim_{\Cl(H)} vs$. But then, since $\Cl(H)$ is closed, by Corollary \ref{cor:coherent}, we have $u\sim_{\Cl(H)} v$, as necessary. 
\end{proof}

The next lemma is used in the next section to show that for certain subgroups $H$ of $F$, the closure of $H$ contains the derived subgroup of $F$. 
	
\begin{lemma}\label{lem:tree}
	Let $H$ be a subgroup of $F$ and let $T$ be a non-empty finite binary tree with branches $u_1,\dots,u_n$. Let $w\in B$ and assume that the following assertions hold. 
	\begin{enumerate}	
		\item $w\sim_H w0\sim_H w1$. 
		\item For every $i=2,\dots,n-1$ we have $u_i\sim_H w$. 
		\item for every $i\geq 0$, we have $u_10^i1\sim_H w$.
		\item for every $i\geq 0$, we have $u_n1^i0\sim_H w$.
	\end{enumerate}
	Then $\Cl(H)$ contains the derived subgroup of $F$. 
\end{lemma}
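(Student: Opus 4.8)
The plan is to verify the hypothesis of Lemma \ref{lem:identified of length k}: I will exhibit a single $k\in\mathbb{N}$ such that $us\sim_H vs$ for all $u,v\in\mathcal B'$ and all $s\in\mathcal B$ with $|s|=k$. It suffices to prove the stronger statement that $us\sim_H w$ for every such $u$ and $s$, since then $us\sim_H w\sim_H vs$ for any second word $v\in\mathcal B'$ (here $\sim_H$ is an equivalence relation, as $H$ is a group).

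First I would record two preliminary facts. Using assumption $(1)$ and the fact that $u\sim_H v$ implies $uw\sim_H vw$, a straightforward induction on $|t|$ gives $w\sim_H wt$ for every $t\in\mathcal B$; indeed, writing $t=ct'$ with $c\in\{0,1\}$, one appends $t'$ to $w\sim_H wc$ and applies the inductive hypothesis to $t'$ to get $wt=wct'\sim_H wt'\sim_H w$. Second, since the branches $u_1,\dots,u_n$ are listed from left to right in a full binary tree, the leftmost branch is $u_1=0^{k_1}$ and the rightmost is $u_n=1^{k_n}$, where $k_1,k_n$ are the depths of the corresponding leaves; moreover the branches form a maximal prefix code, so any word of length at least $\max_i|u_i|$ has a (unique) branch $u_i$ as a prefix.

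The heart of the argument is the following claim: every word $x\in\mathcal B'$ that has some branch $u_i$ as a prefix satisfies $x\sim_H w$. Writing $x=u_i r$, I would split into cases. If $2\le i\le n-1$, assumption $(2)$ gives $u_i\sim_H w$, whence $x=u_i r\sim_H wr\sim_H w$ by appending $r$ and using the first preliminary fact. If $i=1$ then $x=0^{k_1}r$, and since $x\in\mathcal B'$ contains a digit $1$, that $1$ must occur in $r$; writing $r=0^a1r''$ gives $x=0^{k_1+a}1\,r''$, so assumption $(3)$ together with appending $r''$ yields $x\sim_H w$. The case $i=n$ is symmetric, using assumption $(4)$. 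The only words with a branch prefix that escape this argument are the all-$0$ and all-$1$ words, which lie outside $\mathcal B'$; this is precisely why assumptions $(3)$ and $(4)$ insert the ``turning'' digit.

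Finally, set $k=\max_i|u_i|$, the length of the longest branch. Given $u\in\mathcal B'$ and $s$ with $|s|=k$, we have $|us|\ge k\ge\max_i|u_i|$, so $us$ has some branch $u_i$ as a prefix, and it still contains both digits because $u$ does; thus $us\in\mathcal B'$ has a branch prefix, and the claim gives $us\sim_H w$. This establishes the hypothesis of Lemma \ref{lem:identified of length k}, and the conclusion follows. The main point requiring care, which I regard as the only genuine obstacle, is the boundary behaviour at the extreme branches $u_1$ and $u_n$: their descendants are not all $\sim_H$-equivalent to $w$, so one must invoke the $\mathcal B'$ hypothesis to locate the first ``turn'' and reduce to assumptions $(3)$/$(4)$; verifying that appending a block of length $k$ always pushes a $\mathcal B'$-word onto a leaf is the other detail to get right.
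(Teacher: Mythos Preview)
Your proof is correct and follows essentially the same route as the paper's: both set $k$ to be the maximal branch length, reduce via Lemma~\ref{lem:identified of length k} to showing $us\sim_H w$ for $u\in\mathcal B'$ and $|s|=k$, use assumption~(1) to propagate $\sim_H w$ to all descendants, and handle the extreme branches $u_1=0^{k_1}$, $u_n=1^{k_n}$ by locating the first ``turning'' digit in $us$ and invoking assumptions~(3)/(4). The only cosmetic difference is that you isolate the intermediate claim for a general $x\in\mathcal B'$ with a branch prefix, whereas the paper argues directly on $us$.
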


\begin{proof}
	First, note that since $w\sim_H w0\sim_H w1$, it follows by induction that for every finite binary word $q$, we have $wq\sim_H w$. In other words, every descendant $v$ of $w$ is $\sim_H$-equivalent to $w$. It follows, that for every $p\in \mathcal B$ such that $p\sim_H w$, every descendant of $p$ is also $\sim_H$-equivalent to $w$.  Now, let $k$ be the maximum length of a branch of $T$ and let $u,v\in\mathcal B'$. 
	By Lemma \ref{lem:identified of length k}, to prove that $\Cl(H)$ contains the derived subgroup of $F$ it suffices to prove that for every $s\in \mathcal B$ of length $k$ we have $us\sim_H vs$. Let $s\in \mathcal B$ of length $k$. We claim that $us\sim_H w$. 
	Indeed, since $|us|>k$, there is  $i\in\{1,\dots,n\}$ such that the branch $u_i$ of $T$ is a strict prefix of $us$. If $i\in\{2,\dots,n\}$, then since $u_i\sim_H w$ and $us$ is a descendant of $u_i$, we have that $us\sim_H w$ as necessary. Otherwise, either $u_1$ or $u_n$ is a strict prefix of $us$. We consider the case where $u_1$ is a strict prefix of $us$, the other case being similar. Since $u_1$ is a string of zeros and $u$ contains the digit $1$, the word $u_1$ must be a strict prefix of $u$. In fact, there exists $j\geq 0$ such that $u_10^j1$ is a prefix of $u$. Since $u_10^j1\sim_H w$ by assumption and since  $us$ is a descendant of $u_10^j1$, we get that $us\sim_H w$, as necessary. In a similar way, one can show that $vs\sim_H w$. Hence, $us\sim_H vs$, as required. 
\end{proof}

\section{Proof of the main theorems}

To prove Theorem \ref{thm:main intro}, we will need the following proposition. 

\begin{proposition}\label{prop:main}
	Let $f$ be a non-trivial element of $F$ and let $c,d\in\mathbb{Z}\setminus\{0\}$. Then the following assertions hold. 
	\begin{enumerate}
		\item[(1)] There is an element $g\in F$ such that $\pi_{ab}(g)=(c,d)$ and such that $\la f,g\ra$ contains the derived subgroup of $F$. 
		\item[(2)] If $f$ has non-trivial slope at $1^-$ then there is an element $g\in F$ such that $\pi_{ab}(g)=(c,0)$ and such that $\la f,g\ra$ contains the derived subgroup of $F$. 	
		\item[(3)] If $f$ has non-trivial slope at $0^+$ then there is an element $g\in F$ such that $\pi_{ab}(g)=(0,d)$ and such that $\la f,g\ra$ contains the derived subgroup of $F$. 
		\item[(4)] If $f$ has non-trivial slope both at $0^+$ and at $1^-$ then there is an element $g\in F$ such that $\pi_{ab}(g)=(0,0)$ (i.e., such that $g\in [F,F]$) and such that $\la f,g\ra$ contains the derived subgroup of $F$.
	\end{enumerate}
\end{proposition}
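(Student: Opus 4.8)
The plan is to verify the two conditions of Theorem \ref{gen} for the subgroup $H=\la f,g\ra$, for a carefully constructed $g$. Thus it suffices to produce $g$ with the prescribed image $\pi_{ab}(g)$ such that (a) $\Cl(H)$ contains the derived subgroup, and (b) $H$ contains an element with a dyadic fixed point $\alpha\in(0,1)$ having left slope $1$ and right slope $2$. Since neither $H$ nor the slope hypotheses change upon replacing $f$ by $f^{-1}$, Lemma \ref{lem:obvious} lets us assume that $f$ has pairs of branches $p\to q$ and $q\to r$ with $[p]<[q]<[r]$. In particular $p\sim_H q\sim_H r$; these three identified, pairwise disjoint intervals are the seed from which all of $\mathcal B'$ will be collapsed, and $q$ will play the role of the sink word $w$ in Lemma \ref{lem:tree}.

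For (a) I would arrange the hypotheses of Lemma \ref{lem:tree} with $w=q$ and a tree $T$ whose extreme branches are the all-zeros word $u_1=0^a$ and the all-ones word $u_n=1^b$. I design the interior of $g$ to map a fixed finite list of dyadic intervals onto $[q]$, so that the finitely many ``middle'' words become $\sim_H q$ and so that $q\sim_H q0\sim_H q1$, making $q$ absorb all its descendants. The slopes of $g$ at the two endpoints are forced by the target $\pi_{ab}(g)$, and they govern the two infinite families in conditions (3) and (4) of Lemma \ref{lem:tree}, namely $u_10^i1\sim_H q$ and $u_n1^i0\sim_H q$ for all $i\ge 0$. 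At an endpoint where $g$ has non-trivial slope, $g$ itself has, for all large $n$, a pair of branches $0^n\to 0^{n'}$ (resp. $1^n\to 1^{n'}$) with $n\ne n'$; iterating these and using that $\sim_H$ respects appending of words collapses the corresponding infinite family down to finitely many classes, which the interior of $g$ then sends to $q$. At an endpoint where $g$ has trivial slope it fixes a whole neighborhood of that endpoint and is useless there; this is exactly where the assumed non-triviality of $f$ enters, since $f$ then supplies branches $0^n\to 0^m$ (resp. $1^n\to 1^m$) moving intervals arbitrarily close to the endpoint. This is precisely why the four cases are distinguished by which of $c,d$ vanish together with the slope hypotheses on $f$.

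For (b), a local profile with left slope $1$ and right slope $2$ at an interior dyadic point is compatible with every prescribed endpoint behaviour, including the requirement $g\in[F,F]$ in part (4): the derived subgroup contains elements that are the identity near both endpoints yet have an arbitrary interior breakpoint. I would therefore build such a breakpoint directly into $g$, or, if this clashes with the mixing requirements, realize it as a short word in $f$ and $g$. With (a) and (b) established, Theorem \ref{gen} yields $\la f,g\ra\supseteq[F,F]$ in all four cases.

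The main obstacle is the explicit construction of $g$ together with the verification of every $\sim_H$-relation demanded by Lemma \ref{lem:tree}. The finitely many middle relations and the absorption $q\sim_H q0\sim_H q1$ are routine once the interior of $g$ is fixed. The delicate point is the two infinite families near the endpoints: one must match the branches $0^n\to 0^m$ coming from the non-trivial endpoint slope (of $g$ or of $f$, according to the case) to the seed relation $p\sim_H q$, and, when $|c|$ or $|d|$ exceeds $1$, merge the several residue classes that such branches leave behind by connecting their finitely many representatives to $q$ through additional branches of $g$. Keeping this bookkeeping uniform across the four parts, while simultaneously respecting the prescribed value of $\pi_{ab}(g)$, is where the real work lies.
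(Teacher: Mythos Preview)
Your proposal is correct and follows essentially the same route as the paper: apply Lemma \ref{lem:obvious}, build $g$ as an explicit tree-diagram whose interior both creates the sink relation $w\sim_H w0\sim_H w1$ and contains an $x_1$-like piece furnishing the fixed point with slopes $1$ and $2$ for Theorem \ref{gen}(2), then verify the hypotheses of Lemma \ref{lem:tree}; the paper handles the endpoints exactly as you anticipate, using $g$'s slope when it is non-trivial and $f$'s slope when it is not, and deals with the residue classes left by slopes of absolute value greater than $1$ by inserting extra branches of $g$ linking their representatives to the sink. The only cosmetic difference is that the paper takes the rightmost of the three seed words as the sink $w$ rather than the middle one, which is immaterial since all three are $\sim_H$-equivalent.
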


\begin{proof}
	We prove parts (1) and (2). The proofs of parts (3) and (4) are similar.

	(1) Since $f$ is non-trivial, by Lemma \ref{lem:obvious}, there exist finite binary words $u,v$ and $w$ such that $[u]<[v]<[w]$ and such that $f$ or $f^{-1}$ has the pairs of branches $u\to v$ and $v\to w$. It is easy to see that $u,v$ and $w$ must belong to $\mathcal B'$. Note that if (1) holds for $(c,d)$ it also holds for $(-c,-d)$. Hence, we can assume without loss of generality that $c>0$.  
	We will also assume that $d>0$, and we will explain below how to modify the proof for the case where $c>0$ and $d<0$. 
	
	Let $T$ be a finite binary tree such that $u,v0,v1,w0,w10$ and $w11$ are branches of $T$ and such that $T$ has at least three branches after the branch $w11$. Let $(S_+,S_-)$ be the reduced tree-diagram of $x_1$ (see Figure \ref{fig:x0x1}(B)).
	Let $T_1$ be the minimal binary tree with branch $0^{c+1}$ and let $T_2$ be the minimal binary tree with branch $1^c$. Let $C_1$ and $C_2$ be trees which consist of a single caret each. Let $T_3$ be the minimal binary tree with branch $1^{d+1}$ and $T_4$ be the minimal binary tree with branch $0^{d}$.
	
	Let $u_1,\dots,u_n$ be the branches of $T$.	 We will use two copies of the tree $T$ to construct a new tree-diagram in $F$. 
	Let $R_+$ be the tree obtained from the first copy of $T$ by performing the following operations:
	\begin{enumerate}
		\item[(1a)] Attaching the tree $T_1$ to the end of the branch $u_1$; 
		\item[(2a)] Attaching the tree $S_1$ to the end of the branch $w10$; 
		\item[(3a)] Attaching the tree $T_3$ to the end of the branch $u_n$.
	\end{enumerate}

	Let $R_-$ be the tree obtained from the second copy of $T$ by performing the following operations: 
	\begin{enumerate}
		\item[(1b)] Attaching the tree $T_2$ to the end of the branch $u_1$.
		\item[(2b)] Attaching the caret $C_1$ to the end of the branch $w0$.
		\item[(3b)] Attaching the tree $S_2$ to the end of the branch $w10$.
		\item[(4b)] Attaching the caret $C_2$ to the end of the branch $w11$.
		\item[(5b)] Attaching the tree $T_4$ to the end of the branch $u_n$.
	\end{enumerate}
	
	Note that in the construction of $R_+$, we added $(c+1)+3+(d+1)$ carets to the tree $T$. 
	Similarly, in the construction of $R_-$, $c+1+3+1+d$ carets were added to the tree $T$. Hence $R_+$ and $R_-$ have the same number of carets. Let $g$ be the element represented by the tree-diagram $(R_+,R_-)$. We claim that the assertion in Proposition \ref{prop:main}(1) holds for $g$. 
	
	Recall that $w0,w10,w11$ are (necessarily consecutive) branches of $T$. In addition, $w0$ cannot be one of the first $4$ branches of $T$, since $u_1,u,v0$ and $v1$ precede it. By assumption, there are at least three branches in $T$ to the right of the branch $w11$. Hence, there exists $5\leq k\leq n-5$ such that $u_k\equiv w0$, $u_{k+1}\equiv w10$ and $u_{k+2}\equiv w11$. By construction, the tree-diagram $(R_+,R_-)$ has the following sets of pairs of branches (we recommend to the reader verifying it to read separately the list of branches of $R_+$ (the branches on the left hand-side of $\mbox{(A),(B) and (C)}$) and the list of branches of $R_-$ (the branches on the right hand-side of $\mbox{(A),(B) and (C)}$)).
	\begin{align*}
	\mbox{(A)}& \begin{cases}
	u_10^{c+1}\to u_10\\
	u_10^{c+1-i}1\to u_11^i0, & \mbox{ for } 1\leq i\leq c-1\\
	u_101\to u_11^c\\
	u_11\to u_2\\	
	{u_i}\to u_{i+1} & \mbox{for } 2\leq i\leq k-2\\
	u_{k-1}\to w00\\
	u_k\equiv w0\to w01\\
\end{cases}\\
\mbox{(B)}& \begin{cases}
	w100 \to    w100& \\
	w10100\to		w1010& \\
	w10101\to		w10110& \\
	w1011\to		w10111& \\
\end{cases}
\end{align*}
\begin{align*}
\mbox{(C)}& \begin{cases}
	w11\to w110& \\
	u_{k+3}	\to w111\\
	u_{i} \to u_{i-1} & \mbox{for } k+4\leq i\leq n-1\\
	u_n0\to u_{n-1}\\
	u_n10\to u_n0^d\\
	u_n1^i0\to u_10^{d+1-i}1   & \mbox{ for } 2\leq i\leq d\\
	u_n1^{d+1}\to u_n 1\\
\end{cases}
\end{align*}

Note that the pairs of branches in $\mbox{(B)}$ are the pairs of branches of $(R_+,R_-)$ which result from the attachment of the trees $S_1$ and $S_2$ to the branch $w10$ of both copies of $T$ (see Figure \ref{fig:x0x1}(B) for the branches of $S_1$ and $S_2$). 
 The preceding pairs of branches of $(R_+,R_-)$ are all in $\mbox{(A)}$. In particular, $\mbox{(A)}$ contains all the pairs of branches of $(R_+,R_-)$ which result from the attachment of the tree $T_1$ to  the first copy of $T$ and the trees $T_2$ and $C_1$ to the second copy of $T$. Similarly, $\mbox{(C)}$ contains all the pairs of branches of $(R_+,R_-)$ which result from the attachment of the tree $T_3$ to  the first copy of $T$ and the trees $T_4$ and $C_2$ to the second copy of $T$.

Now, the first and last pairs of branches of $(R_+,R_-)$ show that $\pi_{ab}(g)=(c,d)$ as required. Let $H=\la f,g\ra$. It suffices to prove that $H$ contains the derived subgroup of $F$. First, note that the pairs of branches $w100\to w100$ and $w10100\to w1010$ in $\mbox{(B)}$ show that $g$ fixes the dyadic fraction $\alpha=.w101=.w1001^{\mathbb{N}}$ and that $g'(\alpha^-)=1$ and $g'(\alpha^+)=2$. Hence, $H$ satisfies condition (2) of Theorem \ref{gen}. Therefore, to prove that $H$ contains $[F,F]$ it suffices to prove that $\Cl(H)$ contains $[F,F]$. For that, we will make use of Lemma \ref{lem:tree}. Let us denote the equivalence relation $\sim_H$ by $\sim$. To prove that $\Cl(H)$ contains the derived subgroup of $F$ it suffices to prove that for the tree $T$ (with branches $u_1,\dots,u_n$) and the word $w$, conditions (1)-(4) of Lemma \ref{lem:tree} hold.

First, note that 
 since $f$ or $f^{-1}$ has the pairs of branches $u\to v$ and $v\to w$, we have $u\sim v\sim w$. 
Now, let us consider the pairs of branches of $g$. The pairs of branches $u_i\to u_{i+1}$, $2\leq i\leq k-2$ 
imply that $u_2\sim u_3\sim\cdots\sim u_{k-1}$. Recall that $u,v0$ and $v1$ are branches of $T$ (distinct from $u_1$) which precede $u_k\equiv w0$. 
As such, $u,v0,v1\in\{u_2,\dots,u_{k-1}\}$, so that $u\sim v0\sim v1$. Since $u\sim v$, we get that $v\sim v0\sim v1$. It follows that every descendant of $v$ is $\sim$-equivalent to $v$. Since $w\sim v$, every descendant of $w$ is $\sim$-equivalent to $w$ (and as such, Condition (1) of Lemma \ref{lem:tree} holds).
  In particular, since $u_k\equiv w0, u_{k+1}\equiv w10$ and $u_{k+2}\equiv w11$, we have $u_k\sim u_{k+1}\sim u_{k+2}\sim w$. Since $u_2\sim\cdots\sim u_{k-1}\sim u\sim w$, we have $u_2\sim\cdots\sim u_{k-1}\sim u_k\sim u_{k+1}\sim  u_{k+2}\sim w$.

Now, the pair of branches $u_{k+3}\to w111$ in $\mbox{(C)}$ implies that $u_{k+3}\sim w111\sim w$. The pairs of branches $u_i\to u_{i-1}$ for $k+4\leq i\leq n-1$ imply that $u_{k+3}\sim u_{k+4}\sim\cdots\sim u_{n-1}$. Since $u_{k+3}\sim w$, we have that $u_2\sim \cdots u_{k+2}\sim u_{k+3}\sim\cdots\sim u_{n-1}\sim w$. That is, for every $2\leq i\leq n-1$ we have $u_i\sim w$. Hence, Condition (2) of Lemma \ref{lem:tree} holds. 

Next, we show that Condition (3) of Lemma \ref{lem:tree} holds. That is, we claim that for every $i\geq 0$, we have $u_10^i1\sim w$. 
Indeed, for $i=0$ it holds since the pair of branches  $u_11\to u_2$ of the element $g$ implies that $u_11\sim u_2\sim w$. Moreover, since $u_11\sim  w$, every descendant of $u_11$ is  $\sim$-equivalent to $w$. 
 Hence, the pair of branches $u_101\to u_11^c$ of $g$ implies that $u_101\sim u_11^c\sim w$, so the claim holds for $i=1$ as well. 
Now, the pairs of branches $u_10^{c+1-j}1\to u_11^j0$ for $1\leq j\leq c-1$ of $g$ imply that for all  $1\leq j\leq c-1$ we have $u_10^{c+1-j}1\sim w$. Letting $i=c+1-j$, we get that for all $i=2,\dots,c$ we have $u_10^i1\sim w$. Hence, the claim holds for every $i=0,\dots,c$. We prove by induction that the claim also holds for every $i>c$. Indeed, let $i\geq c+1$. Then the pair of branches $u_10^{c+1}\to u_10$ of $g$ implies that 
$$u_10^i\equiv u_10^{c+1}0^{i-(c+1)}\sim
u_100^{i-(c+1)}\equiv u_10^{i-c}.$$
That implies that $u_10^i1\sim u_10^{i-c}1\sim w$, by induction. Hence, for all $i\geq 0$, we have $u_10^i1\sim w$ and Condition (3) of Lemma \ref{lem:tree} holds. 

Note that in the proof of Condition (3) we have made use of branches in $\mbox{(A)}$ (more specifically, of the branches written in the first $4$ rows in $\mbox{(A)}$). In an almost identical manner, using branches of $\mbox{(C)}$ (more specifically, the branches written in the last $4$ rows of $\mbox{(C)}$), one can show that Condition (4) from Lemma \ref{lem:tree} holds. It follows that $\Cl(H)$ contains the derived subgroup of $F$. Hence, by Theorem \ref{gen}, $H=\la f,g\ra$ contains the derived subgroup of $F$, as required.

It remains to note that one can modify the proof for the case where $c>0$ and $d<0$ as follows. First, wherever $d$ appears in the above proof, we replace it by $|d|$. Second, in the construction of $R_+$ and $R_-$, instead of performing operation $(3a)$ on the first copy of $T$ (used in the construction of $R_+$) and operations $(4b)$ and $(5b)$ on the second copy of $T$ (used in the construction of $R_-$), we perform operation $(3a)$ on the second copy of $T$ and operations $(4b)$ and $(5b)$ on the first copy of $T$. The result is a tree-diagram $(R'_+,R'_-)$, whose pairs of branches coincide with the pairs of branches of $(R_+,R_-)$, other than the ones listed in $\mbox{(C)}$: for every pair of branches $p\to q$ in $\mbox{(C)}$, the tree-diagram $(R'_+,R'_-)$ has the ``opposite'' pair of branches $q\to p$. Note that this change, does not affect the proof  that the subgroup generated by $f$ and the element represented by $(R'_+,R'_-)$ contains the derived subgroup of $F$, as the equivalence relation $\sim$ is not affected. 

(2) The proof is similar to the proof of (1).  
First, let $u,v$ and $w$ be as in part (1). By assumption, the element $f$ has non-trivial slope at $1^-$. Replacing $f$ by $f^{-1}$ if necessary, we can assume that $f'(1^-)>1$. Hence, there exist $m>\ell$ in $\mathbb{N}$ such that $f$ has the pair of branches $1^{m}\to 1^{m-\ell}$.

Let $T'$ be a finite binary tree such that $u,v0,v1,w0,w10,w11$ are branches of $T'$. Let $T$ be the tree obtained from $T'$ by attaching the minimal binary tree with branch $1^m$ to the right-most leaf of $T'$. 
Let $r$ be the length of the last branch of $T'$ and note that $T$ has the branches $1^r0,1^{r+1}0,\dots,1^{r+m-1}0,1^{r+m}$. (In particular, since $m\geq 2$, there are at least three branches in $T$ after the branch $w11$.)

 Let $(S_+,S_-)$, $T_1$, $T_2$, $C_1$ and $C_2$ be as in part (1). 
Let $T_3$ be the minimal binary tree
with branch $00$ and $T_4$ be a tree which consists of a single caret.

Let $u_1,\dots,u_n$ be the branches of $T$.	 We use two copies of the tree $T$ to construct a new tree-diagram $(R_+,R_-)$ in $F$, by performing the
operations $(1a)-(3a)$ on the first copy of $T$ and $(1b)-(5b)$ on the second copy of $T$, as in the proof of part $(1)$. Note that there exists $5\leq k\leq n-5$ such that the branch $u_k\equiv w0$. By construction, the tree-diagram $(R_+,R_-)$ has the sets $\mbox{(A)}$ and $\mbox{(B)}$ of pairs of branches as in the proof of part (1) as well as the following set 
\begin{align*}
	\mbox{(C')}& \begin{cases}
		w11\to w110& \\
		u_{k+3}	\to w111\\
		u_{i} \to u_{i-1} & \mbox{for } k+4\leq i\leq n-1\\
		u_n00\to u_{n-1}\\
		u_n01\to u_n0\\
		u_n1\to u_n 1\\
	\end{cases}
\end{align*}

Let $g\in F$ be the element represented by $(R_+,R_-)$. Then $\pi_{ab}(g)=(c,0)$. Let $H=\la f,g\ra$. The proof that $H$ contains the derived subgroup of $F$ is almost identical to the proof in part (1) (note that the first $3$ rows in $\mbox{(C')}$ also coincide with the first three rows in $\mbox{(C)}$). The only difference is in the proof that Condition (4) of Lemma \ref{lem:tree} holds for $H$ (with  the tree $T$ and the word $w$).
Let us prove that the condition holds. That is, we claim that for every $i\geq 0$, we have $u_n1^i0\sim w$ (where $\sim$ stands for $\sim_H$). 
Recall that by construction $u_n\equiv 1^{r+m}$. Hence, we need to prove that for every $i\geq 0$, we have $1^{r+m+i}0\sim w$. Clearly, it suffices to prove that for every $i\geq 0$, we have $1^{r+i}0\sim w$.  By Condition (1) of Lemma \ref{lem:tree} (which holds here, as in part (1)), for every $j\in\{2,\dots,n-1\}$ we have $u_j\sim w$. Since $1^r0,1^{r+1}0,\dots,1^{r+m-1}0$ are branches of $T$ (which are clearly not the first nor the last branch of $T$), for every $0\leq i\leq m-1$, we have $1^{r+i}0\sim w$. We prove by induction that the claim also holds for every $i\geq m$. Indeed, let $i\geq m$. Since $f$ has the pair of branches $1^m\to 1^{m-\ell}$, we have $1^m\sim 1^{m-\ell}$. It follows that $1^{r+i}0\sim 1^{r+(i-\ell)}0$. Since $i-\ell<i$, we are done by induction. Hence, Condition (4) of Lemma \ref{lem:tree} holds for $H$, as required.  
\end{proof}

Theorem \ref{thm:main intro} is a corollary of Proposition \ref{prop:main}. Recall the statement of the theorem.

\begin{Theorem2}\label{thm:main}
	Let $(a,b),(c,d)\in\mathbb{Z}^2$ be such that  $\{a,c\}\neq\{0\}$ and $\{b,d\}\neq\{0\}$. Let $f\in F$ be a non-trivial element such that $\pi_{ab}(f)=(a,b)$. Then there exists an element $g\in F$ such that $\pi_{ab}(g)=(c,d)$ and such that $\la f,g\ra=\pi_{ab}^{-1}(\la (a,b),(c,d)\ra)$.
\end{Theorem2}

\begin{proof}
	It suffices to prove that there exists an element $g\in F$ such that $\pi_{ab}(g)=(c,d)$ and such that $\la f,g\ra$ contains the derived subgroup of $F$. Indeed, in that case, $\pi_{ab}(\la f,g\ra)=\la (a,b),(c,d)\ra$ and since $\la f,g\ra$ contains the derived subgroup of $F$, we have that $$\la f,g\ra=\pi_{ab}^{-1}(\la(a,b),(c,d)\ra),$$ as necessary. 
	
	If $c,d\neq 0$, then by Proposition \ref{prop:main}(1), there is an element $g\in F$ such that $\pi_{ab}(g)=(c,d)$ and such that $\la f,g\ra$ contains the derived subgroup of $F$, as required.
	
	If $c\neq 0$ and $d=0$, then by assumption $b\neq 0$. Hence, $f$ has non-trivial slope at $1^-$. Then by Proposition \ref{prop:main}(2), there is an element $g\in F$ such that $\pi_{ab}(g)=(c,0)=(c,d)$ and such that $\la f,g\ra$ contains the derived subgroup of $F$, as necessary.
	
	If $c=0$ and $d\neq 0$, we are done in a similar way, using Proposition \ref{prop:main}(3). 
	
	Finally, if $c=d=0$, then $a\neq 0$ and $b\neq 0$. Then $f$ has non-trivial slope both at $0^+$ and at $1^-$. Hence, by Proposition \ref{prop:main}(4), there is an element $g\in F$ such that $\pi_{ab}(g)=(0,0)=(c,d)$ and such that $\la f,g\ra$ contains the derived subgroup of $F$, as necessary.
\end{proof}

Finally, Theorem \ref{thm:almost3/2} is a corollary of Theorem \ref{thm:main intro}.
 Indeed, let $f\in F$ be such that $\pi_{ab}(f)=(a,b)$ forms part of a generating pair of $\mathbb{Z}^2$. Then there exists $(c,d)\in\mathbb{Z}^2$ such that $\{(a,b),(c,d)\}$ is a generating set of $\mathbb{Z}^2$. Since $(a,b)$ and $(c,d)$ generate $\mathbb{Z}^2$, we have that $\{a,c\}\neq \{0\}$ and $\{b,d\}\neq \{0\}$. Hence, by Theorem \ref{thm:main intro}, there is an element $g\in F$ such that $\pi_{ab}(g)=(c,d)$ and such that 
$$	\la f,g\ra=\pi_{ab}^{-1}(\la (a,b),(c,d)\ra)=\pi_{ab}^{-1}(\mathbb{Z}^2)=F.$$
Hence, $f$ is part of a $2$-generating set of $F$. 
\qed

\begin{minipage}{3 in}
	Gili Golan\\
	Department of Mathematics,\\
	Ben Gurion University of the Negev,\\ 
	golangi@bgu.ac.il
\end{minipage}
\end{document}